\newtheorem{theorem}{Theorem}
\newtheorem{cor}{Corollary}
\newcommand{\mcC}{\mathcal{C}}
\newcommand{\mcB}{\mathcal{B}}
\theoremstyle{definition}
\newtheorem{defn}{Definition}[section]
\theoremstyle{plain}
\newtheorem{thm}{Theorem}[section]
\newtheorem{lem}[thm]{Lemma}
\newtheorem{prop}[thm]{Proposition}
\title[TQC with MCG Representations]
{On Topological Quantum Computing with Mapping Class Group Representations}
\author{Wade Bloomquist}
\email{bloomquist@math.ucsb.edu}
\address{Dept. of Mathematics\\
    University of California\\
    Santa Barbara, CA 93106-6105\\
    U.S.A.}
\author{Zhenghan Wang}
\email{zhenghwa@microsoft.com, zhenghwa@math.ucsb.edu}
\address{Microsoft Station Q and Dept. of Mathematics\\
 University of California\\
   Santa Barbara, CA 93106-6105\\
    U.S.A.}
\thanks{The second author thanks M. Barkeshli for asking him related questions, and is partially supported by NSF grants DMS-1411212 and  FRG-1664351.}
\keywords{Anyons, mapping class groups, Clifford groups}
\begin{document}

\begin{abstract}
We propose an encoding for topological quantum computation utilizing quantum representations of mapping class groups.  Leakage into a non-computational subspace seems to be unavoidable for universality. We are interested in the possible gate sets which can emerge in this setting.  As a first step, we prove that for abelian anyons, all gates from these mapping class group representations are normalizer gates.  Results of Van den Nest then allow us to conclude that for abelian anyons this quantum computing scheme can be simulated efficiently on a classical computer.  With an eye toward more general anyon models we additionally show that for Fibonnaci anyons, quantum representations of mapping class groups give rise to gates which are not generalized Clifford gates.
\end{abstract}

\maketitle

\section{Introduction}

Experimental topological quantum computation hinges on a trade-off between the computational power of anyons and their detection and control in laboratories.  So far none of the experimentally accessible anyons are univeral through braiding alone like the Fibonacci anyon \cite{FLW02,RW}.  Therefore, designing protocols to supplement these braidings remains interesting in the search for a universal gate set.  In \cite{BF}, it is shown that all mapping class group representations arising from an abelian anyon model are, in principle, accessible for quantum computation.  It is known that all quantum representations of mapping class groups coming from abelian anyons have finite image, and hence cannot be used to construct a universal gate set \cite{FF,Gus}.  In this paper, we prove a stronger result that they actually are all generalized Clifford gates with respect to a natural encoding.  This allows for the application of results on the ability to simulate this computing scheme classically.

Given a unitary modular category or anyon model $\mcB$ of rank $d$, a natural construction gives projective unitary representations of the mapping class group of each oriented closed surface $\Sigma_g$ of genus $g$.  We explore a computational subspace inside of $V(\Sigma_g)$, namely a copy of $V(\Sigma_1)^{\otimes g}$. Intuitively this subspace can be thought of in terms of the handlebodies that these surfaces bound.  The genus $g$ handlebody bounds a trivalent graph which contains $g$ cycles, and these cycles are the subgraphs which correspond to the copy of the torus in our subspace.  With the appropriate choice of this spine, as seen below in Figure \ref{gbasis}, we see that there is a subgroup of $\mathrm{MCG}(\Sigma_g)$ which acts on each component $V(T^2)$ just as $\mathrm{MCG}(T^2)\cong \mathrm{SL}(2,\mathbb{Z})$ does.   This gives us $g$ qudits in $V(\Sigma_g)$, and the image of mapping classes under the quantum representation provides gates on these qudits.

Results of 
Ng and Schauenburg, \cite{NS}, provide a significant limitation as they have shown that the quantum representation of the torus will always have finite image.  This tells us that the $1$ qudit gates form a finite group in this encoding. So if we take our encoding without any leakage, we have no hope of reaching a universal gate set.  In this paper we will turn away from the question of universality, and instead focus on what types of gates can occur in this finite collection.  In the abelian anyon case the computational subspace actually equals the entire space $V(\Sigma_g)$.  Our attention will be on how, in this case, a generalized Knill-Gottesmann theorem can be used to simulate this model classically.

%e use the Hilbert space associated to the torus $V(T^2)=\bC^d$ as a qudit and the subspace of $V(\Sigma_g)$with all $a_i$=vacuum in Fig. \ref{gbasis} for $g$ qudits.  Therefore, our $g$-qudit is the subspace of $g$ disjoint tori inside $\Sigma_g$. By the theorem of Ng-Schauenburg \cite{NS}, the $1$-qudit gates form a finite group, so with our encoding without any leakage, the mapping class group representations cannot give rise to universal gates sets.  We are interested in whether or not some generalized version of Knill-Gottesmann theorem holds for such a quantum computation scheme.   As a first step, we prove that for abelian anyons, all gates from mapping class group representations are generalized Clifford gates.
%
\section{Preliminaries}

\subsection{Hilbert space of states}
Given a unitary modular category or anyon model $\mathcal{B}$ with a complete representative set of anyons $\Pi_\mcB$, we construct the Hilbert space of states, or the TQFT Hilbert space, $V(\Sigma_g)$ of any closed oriented surface $\Sigma_g$ of genus $g$ following \cite{RT1,RT2,Tur}.  Embed $\Sigma_g$ in $\mathbb{R}^3$ so that it bounds the standardly embedded handlebody $H_g$ in $\mathbb{R}^3$, then we assign to $\Sigma_g$ a spine, $S$, of $H_g$, i.e. the trivalent graph whose regular neighborhood is $H_g$.  
\begin{defn}
A \textbf{$\Pi_{\mathcal{B}}$ coloring} of a ribbon graph, $\Gamma$, is an assignment of objects in $\mathcal{B}$ to the edges of $\Gamma$ and morphisms of the cyclically ordered colors assigned to the incident half edges to the vertices of $\Gamma$.  
\end{defn}
The Hilbert space $V(\Sigma_g)$ is spanned by the basis of $\Pi_{\mathcal{B}}$-colorings of $S$.  Our choice of $S$, corresponding to one choice of basis for $V(\Sigma_g)$, is shown in Fig. \ref{gbasis}.
\begin{figure}[h!]
\[\includegraphics[scale=.5]{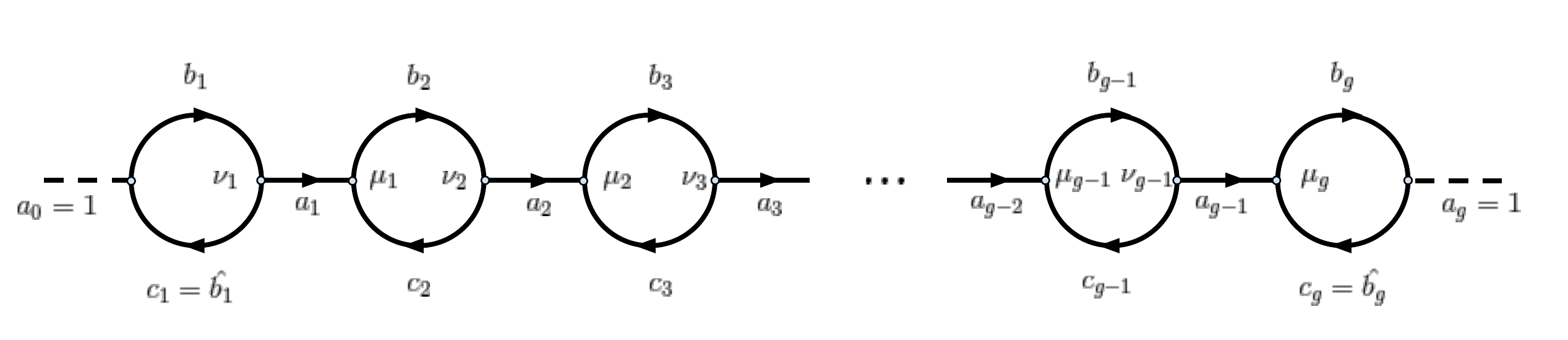}\]
\caption{A basis element of $V(\Sigma_g)$}
\label{gbasis}
\end{figure}
We denote a basis element of this type as 

\[
\vec{v}=(\vec{a},\vec{b},\vec{c},\vec{\mu},\vec{\nu})
\]
\[
=(a_1,..., a_{g-1},b_1,..,b_g,c_1, ...,c_g,\mu_2,...,\mu_g,\nu_1,...,\nu_{g-1})
\]

(with an overall constant from vertices).  That this vector space is actually a (projective) representation of the mapping class group is an immediate consequence of the axioms of a TQFT.  In particular, any mapping class can be described nicely in terms of a cobordism, namely the mapping cyclinder, in such a way that a TQFT must assign a (projective) action on the state space to each mapping class.  A nice overview of this for general $(2+1)$ TQFTs can be found in \cite{Wang}.  

\subsection{The Projective Action}
Let $h:\Sigma\rightarrow \Sigma$ be an orientation preserving homeomorphism in the mapping class group $\mathrm{MCG}(\Sigma)$ of $\Sigma$.  
Consider the cylinder $\Sigma\times [0,1]$ and regard $\Sigma\times\{0\}$ as being parametrized by $id$ and $\Sigma\times \{1\}$ by $h$.  Suppose $H$ is the handlebody bounded by $\Sigma$, with spince $S$ taken as a ribbon graph in $H$ such that the colorings of $S$ are the basis of $V(\Sigma)$.  Then gluing $(H,S)$ to $\Sigma\times\{1\}$ with $h$ and to $\Sigma\times\{0\}$ with the identity results in a pair $(M,\Omega)$ of a closed $3-$manifold $M$ and a ribbon graph $\Omega$ in $M$.  Evaluation of the invariant for this pair gives an operator
\[V_h:V(\Sigma)\rightarrow V(\Sigma).\]
The resulting (projective) representation of $\mathrm{MCG}(\Sigma)$ is called the quantum representation of mapping class groups.  Our focus is on the generators of the mapping class group so $h$ is a positive Dehn twist about a simple closed curve $\gamma$.  Applying the above construction to the specific case of Dehn twists amounts to labeling $\gamma$ with the Kirby color $\omega$, giving $\gamma$ a $-1$ framing relative to the $\Sigma$, and  then evaluating the ribbon graph invariant \cite{Rob}. The Kirby color $\omega$, defined in Figure \ref{Kirb}, is key to this construction and underlies the ability to use surgery to perform these computations diagrammatically.  
\begin{figure}[h!]
\includegraphics[scale=.45]{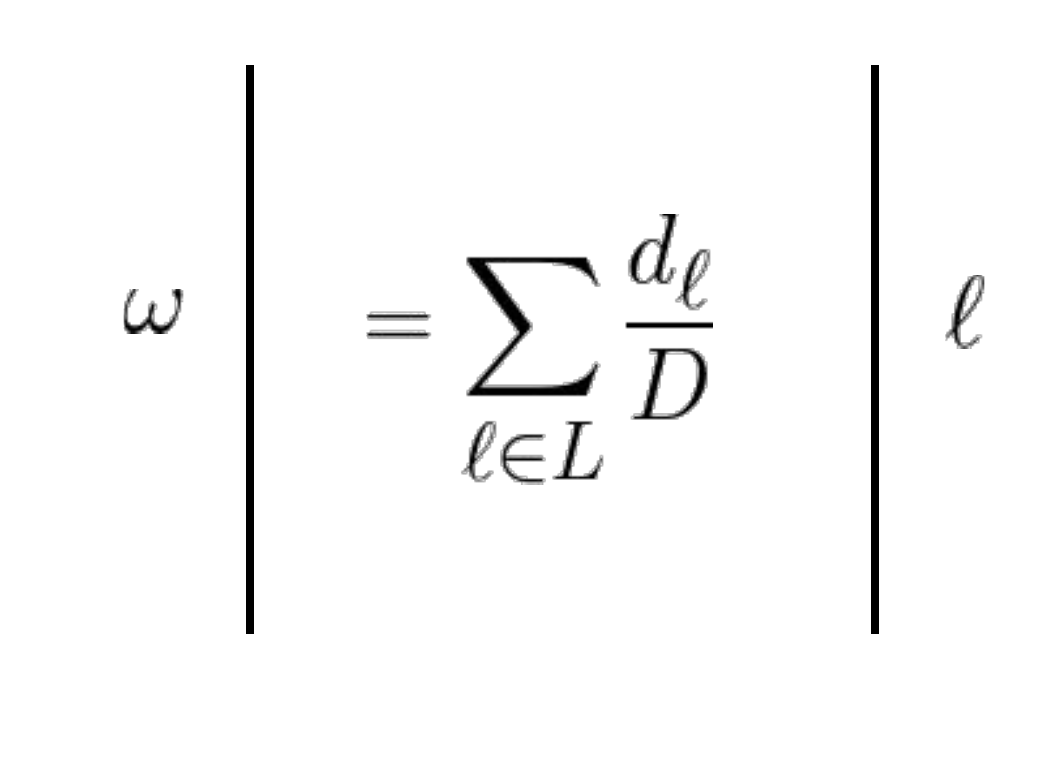}
\caption{}
\label{Kirb}
\end{figure}

\subsection{Abelian Anyon Models}
An abelian anyon model is one in which all quantum dimensions are $1$.  The fusion rules of an abelian anyon model form a finite abelian group, $G$.  A given finite abelian group gives rise to a family of abelian anyon models, which for our purposes should be thought of as indexed by a $3-$cocycle and a pure quadratic form on the group.  We list some of the relevant data for these theories \cite{Sti}:  Let $a,b,c\in G$
\[[F^{a,b,c}_{a+b+c}]_{a+b,b+c}=f(a,b,c)\in H^{3}(G,\mathbb{Q}/\mathbb{Z})\]

\begin{figure}[h!]
\includegraphics[scale=.425]{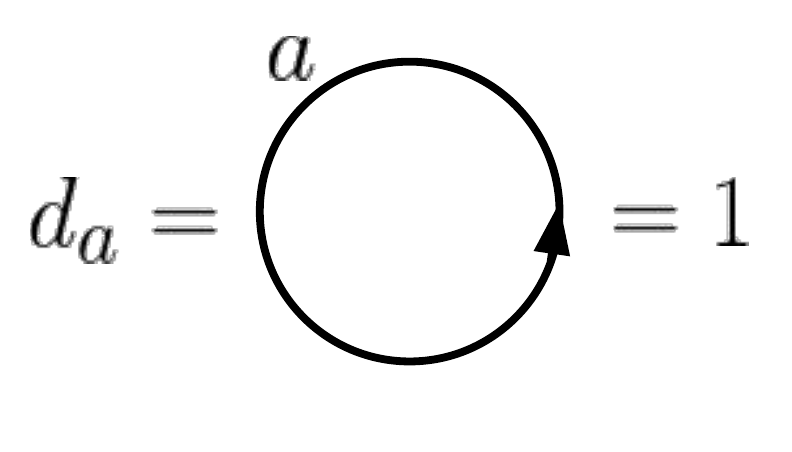}
\end{figure}

\begin{figure}[h!]
\includegraphics[scale=.425]{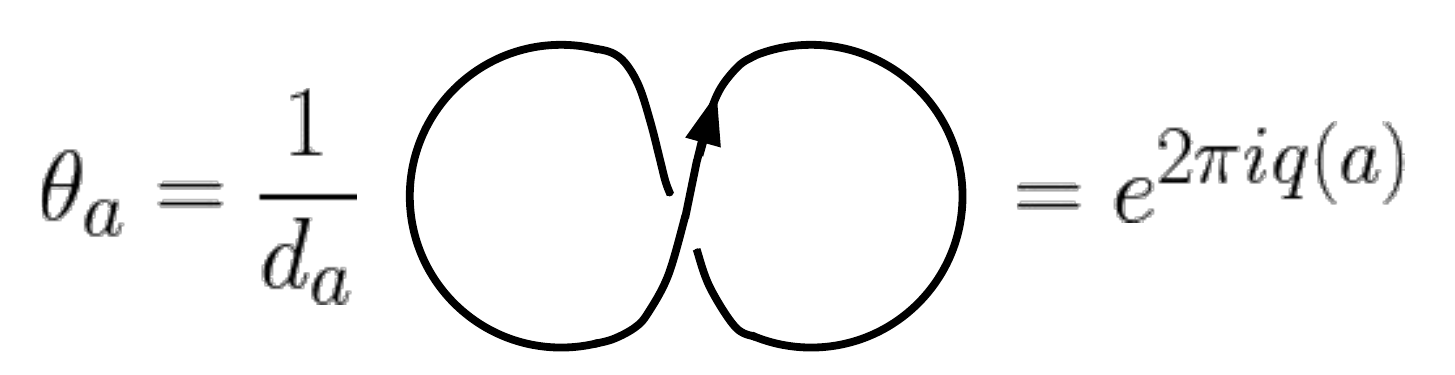}
\end{figure}

Where $q:G\rightarrow \mathbb{R}/\mathbb{Z}$ is a pure quadratic form on $G$.  
\begin{figure}[h!]
\includegraphics[scale=.425]{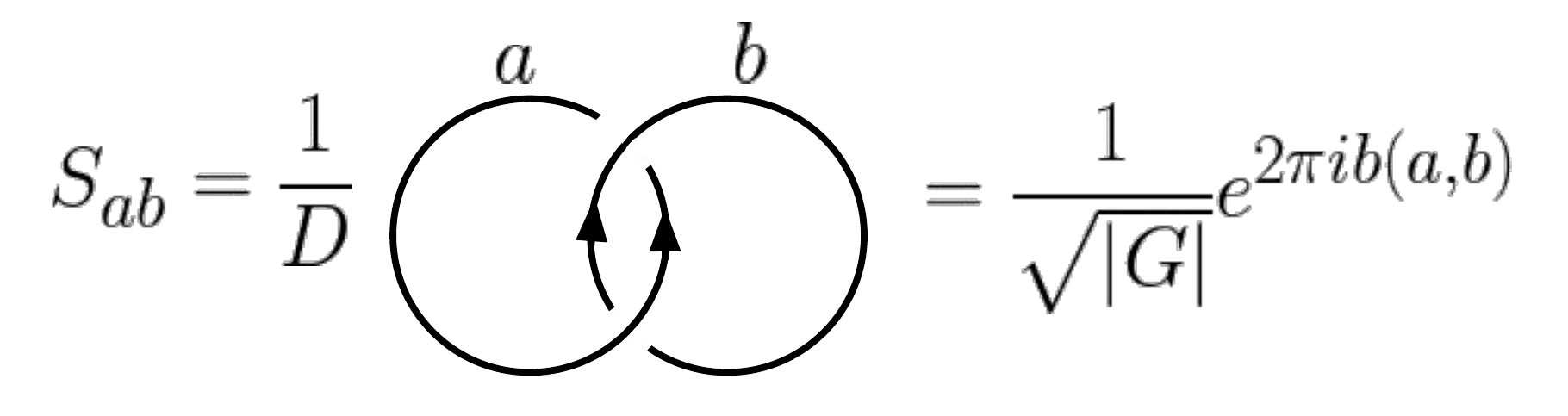}
\end{figure}
\newline
where
\[b(x,y)=q(x+y)-q(x)-q(y)\]
is the bi-linear form associated to $q$.

\subsection{Clifford Groups}
We follow the exposition given in \cite{BLV}.  Let $G$ be a finite abelian group, decomposed as 
\[G=\mathbb{Z}/m_1\mathbb{Z}\times...\times \mathbb{Z}/m_s\mathbb{Z}.\]
We consider the complex vector space
\[\mathcal{H}_G=\mathbb{C}^{m_1}\otimes...\otimes \mathbb{C}^{ m_s},\] 
and so we have
\[\mathcal{H}_G=\mathrm{span}\{|g\rangle:g\in G\}.\]
We note the following observation
\[\mathcal{H}_G^{\otimes n}=\mathcal{H}_{\bigoplus_{i=1}^n G}\]
We will use the notation
\[G^n:=\bigoplus_{i=1}^n G\]

\subsubsection{Pauli Group over $G$}
\begin{defn}
A \textbf{Pauli operator over $G$} is any unitary operator in $U(\mathcal{H}_G)$ of the form
\[\sigma_{(a,g,h)}:=\gamma^aZ_gX_h\]
where
\begin{align*}
&\gamma=e^{\frac{\pi i}{|G|}}\\
&a\in \mathbb{Z}_{2|G|}\\
&X_g(|x\rangle)=|g+x\rangle\\
&Z_h(|x\rangle)=\chi_h(x)|x\rangle \\
\end{align*}
where $\chi_h$ is a character of $G$.
\end{defn}
\begin{defn}
The \textbf{Pauli group over $G$} is the subgroup of $U(\mathcal{H}_G)$ generated by all Pauli operators, denoted $P_{1,G}$.  Then we have
\[P_{n,G}:=(P_{1,n})^{\otimes n}\subseteq U(\mathcal{H}_G^{\otimes n})=U(\mathcal{H}_{G^n})\] 

called the \textbf{$n^{th}$ Pauli Group over $G$}

\end{defn}

\subsubsection{Clifford Group over $G$}
\begin{defn}
The \textbf{$n^{th}$ Clifford Group over $G$}, denoted as $\mcC_{n,G}$, is the normalizer of $P_{n,G}$ in $U(\mathcal{H}_G^{\otimes n})$.  
\end{defn}
As operators differing by only a phase will not contribute to a conjugation we can direct our focus on $PU(\mathcal{H}_G^{\otimes n})$ instead of $PU(\mathcal{H}_G^{\otimes n})$.

\subsubsection{Normalizer Circuits}
\begin{defn}
A \textbf{Normalizer Circuit over $G$} is one composed of the following gate types: 
\begin{itemize}
\item Group automorphism gates:
\[|g\rangle\mapsto |\psi(g)\rangle\]
for $\psi(g)$ a group automorphism.

\item Quadratic phase gates:
\[|g\rangle\mapsto \zeta(g) |g\rangle\]
where $|\zeta(g)|=1$ and 
\[\zeta(g+h)=\zeta(g)\zeta(h)B(g,h)\]
where
\[B(x+y,g)=B(x,g)B(y,g)\]
\[B(g,x+y)=B(g,x)B(y,g).\]

\item Quantum Fourier Transforms:

\[\mathcal{F}: |g\rangle\mapsto \frac{1}{|G|}\sum_{x\in G}\chi_x(g)|x\rangle\]
Where $\chi_x$ are characters of the group.

\end{itemize}
\end{defn}
\begin{thm}\cite{Van}
The subgroup of $U(\mathcal{H}_G)$ generated by normalizer gates over $G^n$ is contained in $C_{n,G}$.  
\end{thm}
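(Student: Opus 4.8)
The plan is to reduce the statement to a short list of conjugation identities, one per type of normalizer gate. First I would note that $\mcC_{n,G}$, being the normalizer of $P_{n,G}$ inside a group, is itself a subgroup of $U(\mathcal{H}_G^{\otimes n})$ and in particular closed under composition; since a normalizer circuit over $G^n$ is by definition a finite product of group automorphism gates, quadratic phase gates, and quantum Fourier transforms over $G^n$, it therefore suffices to show that conjugation by each such individual gate sends $P_{n,G}$ into itself. Here I use the identification $\mathcal{H}_G^{\otimes n}=\mathcal{H}_{G^n}$ and the fact that every $X$- or $Z$-operator over $G^n$ is a tensor product of the corresponding operators over the $n$ factors, so that landing in the Pauli group over $G^n$ is the same as landing in $P_{n,G}$ up to a phase. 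Finally, since $P_{n,G}$ is generated by the shift operators $X_h$, the clock operators $Z_g$, and the central scalar $\gamma$, and $\gamma$ commutes with everything, I only need to verify that $UX_hU^{\dagger}$ and $UZ_gU^{\dagger}$ are Pauli operators (up to a phase) for $U$ a single automorphism gate, quadratic phase gate, or Fourier transform.

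The three computations are then direct. For an automorphism gate $U_\psi\colon|g\rangle\mapsto|\psi(g)\rangle$ one gets $U_\psi X_h U_\psi^{\dagger}=X_{\psi(h)}$ at once, while $U_\psi Z_g U_\psi^{\dagger}$ acts on $|x\rangle$ by the character $x\mapsto\chi_g(\psi^{-1}(x))$, which under the fixed isomorphism $G^n\cong\widehat{G^n}$ equals $Z_{g'}$ for a unique $g'$; both are Pauli. For a quadratic phase gate $D_\zeta\colon|g\rangle\mapsto\zeta(g)|g\rangle$ one has $D_\zeta Z_g D_\zeta^{\dagger}=Z_g$ since diagonal operators commute, and expanding $D_\zeta X_h D_\zeta^{\dagger}|x\rangle$ produces the scalar $\zeta(h+x)\overline{\zeta(x)}=\zeta(h)B(h,x)$; since $x\mapsto B(h,x)$ is a character, this identifies $D_\zeta X_h D_\zeta^{\dagger}$ with $\zeta(h)$ times an $X$-$Z$ type Pauli operator. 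For the Fourier transform $\mathcal{F}$, the orthogonality relations for the characters of $G^n$ give $\mathcal{F}X_h\mathcal{F}^{\dagger}=Z_{h'}$ and $\mathcal{F}Z_g\mathcal{F}^{\dagger}=X_{g''}$, where $h'$ and $g''$ are determined by $h$, $g$ and the self-pairing $G^n\cong\widehat{G^n}$; these are Pauli as well. Closing under composition then yields the theorem.

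The only point that is not pure bookkeeping, and which I expect to be the crux, is the handling of phases. Conjugation is insensitive to scalars and, as the authors note, $\mcC_{n,G}$ is taken up to phase; with that convention the extra scalars $\zeta(h)$ (and the analogous ones coming from $\mathcal{F}$) are harmless and the three computations above already suffice. If one instead wants an on-the-nose statement inside $U(\mathcal{H}_G^{\otimes n})$, one must further check that these scalars are roots of unity of the order permitted in $P_{n,G}$: here the relation $\zeta(mg)=\zeta(g)^m B(g,g)^{m(m-1)/2}$, proved by induction on $m$ from the quadratic law, together with the fact that the bi-character $B$ on a finite group takes values in roots of unity, shows after a short case analysis on the parity of $\mathrm{ord}(g)$ that $\zeta$ is valued in suitable roots of unity. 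Either way, every generating normalizer gate conjugates $P_{n,G}$ into itself, so any normalizer circuit over $G^n$ lies in $\mcC_{n,G}$, as claimed.
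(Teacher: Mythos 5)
Your proof is correct, but note that there is nothing in the paper to compare it against: the statement is quoted with the citation \cite{Van}, and the paper supplies no proof of its own. What you have written is essentially the standard argument from that reference (see also \cite{BLV}): reduce to the three generating gate types, then check by direct conjugation that each sends the $X$- and $Z$-type generators of $P_{n,G}$ back into $P_{n,G}$ --- automorphism gates relabel the shift and clock operators, a diagonal quadratic gate fixes each $Z_g$ and sends $X_h$ to $\zeta(h)$ times an $XZ$-type operator via the defining relation $\zeta(h+x)=\zeta(h)\zeta(x)B(h,x)$, and the Fourier transform exchanges $X$- and $Z$-types by character orthogonality. You also correctly isolate the one delicate point, the scalars: since the paper defines $\mcC_{n,G}$ as the normalizer of $P_{n,G}$ inside $U(\mathcal{H}_G^{\otimes n})$ (the passage to $PU$ comes only afterwards), one does need your closing observation that $\zeta$ is valued in roots of unity of the order admitted by the central phases $\gamma^a$; your relation $\zeta(mg)=\zeta(g)^mB(g,g)^{m(m-1)/2}$ with $m=\mathrm{ord}(g)$, together with $\mathrm{ord}(g)\mid\exp(G^n)=\exp(G)\mid |G|$, gives exactly that, so the factor $\zeta(h)$ is absorbed into $P_{n,G}$. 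The only step left tacit is that conjugation maps the finite group $P_{n,G}$ into itself injectively and hence onto it, so ``into'' really does give membership in the normalizer; that is immediate, and with it your argument is complete and faithful to the cited source.
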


\begin{thm}\cite{Van}\label{Vanden}
Any normalizer circuit over any finite abelian group can be classically efficiently simulated in at most polynomial time in the number of quantum Fourier transforms, number of gates in the circuit, the number of cyclic factors of the group, and the logarithm of the orders of the cyclic factors.  
\end{thm}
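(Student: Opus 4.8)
The plan is to generalize the Gottesman--Knill stabilizer-tracking argument from qubits to an arbitrary finite abelian group $G=\mathbb{Z}/m_1\mathbb{Z}\times\cdots\times\mathbb{Z}/m_s\mathbb{Z}$, and to show that every intermediate object in the simulation admits a description of size polynomial in $n$, $s$, and $\sum_i\log m_i$, with each normalizer gate and each computational-basis measurement updating this description in polynomial time. First I would introduce the appropriate notion of a \emph{stabilizer state} over $G^n$: a state fixed, up to phase, by a maximal ``isotropic'' subgroup $S$ of the Pauli group $P_{n,G}$. Such a state is supported on a coset $v_0+H$ of a subgroup $H\le G^n$, and on that support it is given by a modulus-one function that is quadratic with respect to a bilinear form $B$, in exactly the sense of the quadratic phase gate above. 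The data $(v_0,H,\text{phase})$ can be stored using a generating set for $H$ --- equivalently an integer matrix reduced to Smith or Hermite normal form modulo the $m_i$ --- together with the finitely many parameters of the quadratic phase; this is the polynomial-size ``tableau.''

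Next I would verify that each of the three normalizer gate types preserves the class of stabilizer states and updates the tableau in polynomial time. A group automorphism gate acts $\mathbb{Z}$-linearly on $G^n$, so it transforms $(v_0,H)$ by an invertible integer matrix and composes with the phase. A quadratic phase gate leaves $H$ unchanged and multiplies the phase by another quadratic function, so the update is addition of bilinear and quadratic forms. The quantum Fourier transform --- including \emph{partial} transforms on a single tensor factor --- acts as a symplectic map exchanging the $X$-type and $Z$-type components: it sends the coset $v_0+H$ to (essentially) the annihilator $H^{\perp}$ in the Pontryagin dual, with the quadratic phase transformed by a discrete Gauss-sum rule. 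The computational content of all three is linear algebra over the rings $\mathbb{Z}/m_i\mathbb{Z}$: solving linear congruences, and computing kernels, images, intersections, and annihilators of finitely generated abelian groups, all polynomial time via Smith normal form. I would then treat computational-basis measurements: the outcome distribution is uniform over the image of $v_0+H$ under the relevant coordinate projection, and the post-measurement stabilizer group is obtained by intersecting $H$ with a subgroup and updating $v_0$ --- again polynomial time.

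Finally I would assemble the simulation: starting from the stabilizer state $|0\cdots 0\rangle$, propagate the tableau through the circuit gate by gate, and at the end either evaluate an amplitude or sample the measurement distribution. Sampling reduces to drawing a uniform element of a coset of a subgroup of $G^n$ and evaluating a quadratic Gauss sum over a finite abelian group, which has a closed form and is efficiently computable. Counting resources: each automorphism and each phase gate costs $\mathrm{poly}(n,s,\log m_i)$; each QFT costs the same but is singled out in the statement because it is the step that recomputes the ``Fourier side'' of the tableau; measurements likewise. Hence the total running time is polynomial in the number of gates, the number of quantum Fourier transforms, $s$, and $\log m_i$.

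I expect the main obstacle to be the quantum Fourier transform step, particularly partial QFTs over subgroups. One must show that annihilators, Gauss sums, and quadratic phases all behave correctly when $G$ is an arbitrary product of cyclic groups --- so the coefficient rings are not fields and the subgroup $H$ need not split as a product --- and that none of these operations inflates the size of the tableau. Keeping the book-keeping of the quadratic phase consistent under composition of a long sequence of such transforms, so that the representation stays canonical and polynomial-size, is the delicate part; the automorphism gates, phase gates, and measurement updates are comparatively routine linear algebra modulo the $m_i$.
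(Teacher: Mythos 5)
This theorem is not proved in the paper at all: it is imported verbatim from Van den Nest \cite{Van}, and the paper's ``proof'' is the citation. Your outline is essentially the argument of that reference --- the abelian-group stabilizer formalism in which states are tracked as coset states with quadratic phases, with each automorphism gate, quadratic phase gate, partial Fourier transform, and measurement updating a polynomial-size description via linear algebra (Smith normal form, annihilators, Gauss sums) over the rings $\mathbb{Z}/m_i\mathbb{Z}$ --- so it takes the same route as the actual proof in the literature, correctly flagging the partial QFT bookkeeping as the delicate step.
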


\section{A General Computation}
\subsection{Mapping Class Group Generators}
We will be working with the Humphries generators of the mapping class group of a genus $g$ surface as seen in Fig. ~\ref{humph}.  
\begin{figure}[h!]
\[\includegraphics[scale=.5]{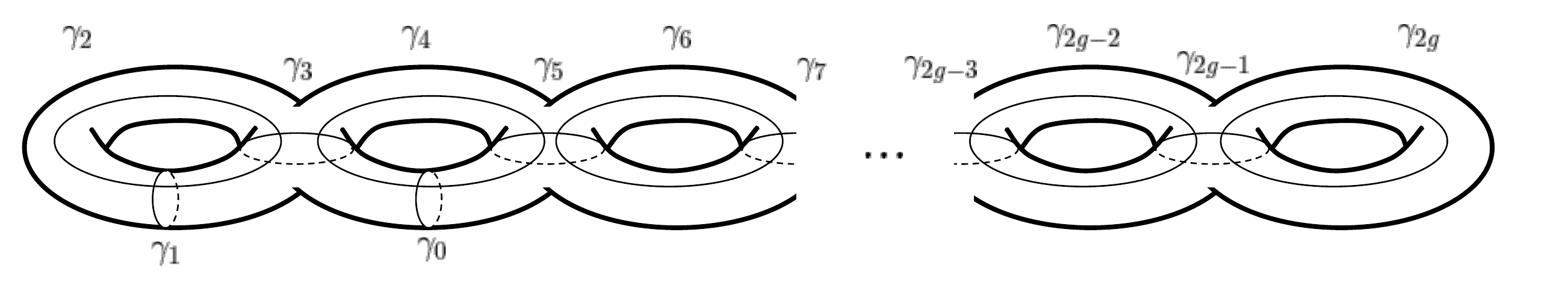}\]
\caption{The Humphries generators}
\label{humph}
\end{figure}
The actual generators of the mapping class group are positive Dehn Twists about these $2g+1$ curves.  In particular we will fix the notation that $T_i$ will stand for the image under the quantum representation of a positive Dehn twist about the curve $\gamma_i$.  
\subsection{$T_0$ and $T_1$}
The local computation seen in Fig. \ref{T0T1} can be applied to the computations of $T_0$ and $T_1$.  
\begin{figure}[h!]
\[\includegraphics[scale=.35]{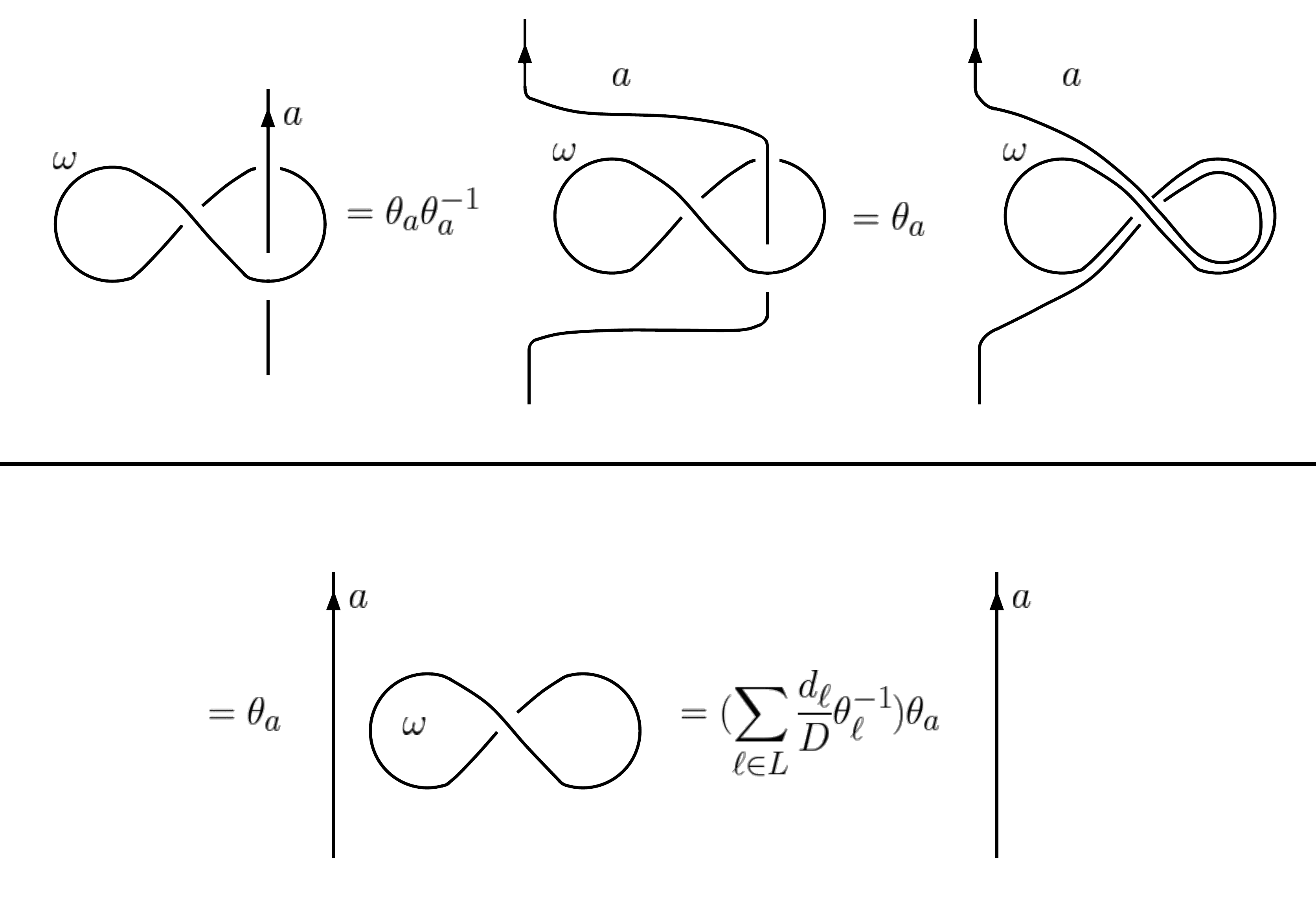}\]
\caption{}
\label{T0T1}
\end{figure}
\newpage
In particular we see that 
\[T_0(\vec{v})=\frac{p_-}{D}\theta_{c_2} \vec{v}\]
and 
\[T_1(\vec{v})=\frac{p_-}{D}\theta_{c_1}\vec{v},\]
where $c_1$ and $c_2$ are the colors on the appropriate edges in $V(\Sigma_g)$ as seen in Figure \ref{gbasis} which are relevant for the Dehn twists about the curves $\gamma_0$ and $\gamma_1$ seen in Figure \ref{humph}, and
\[\frac{p_-}{D}=\sum_{a\in L} \frac{d_a}{D}\theta_a^{-1}=e^{-c\pi i/4}\]
is a root of unity, where $c$ is the central charge \cite{FG}.

\newpage
\subsection{$T_{2i+1}$ for $i=1,...,g-1$}

The local computation shown in Fig. \ref{T2i+11} and Fig. \ref{T2i+12} can be applied to find $T_{2i+1}$ for $i=1,...,g-1$.  
\begin{figure}[h!]
\[\includegraphics[scale=.5]{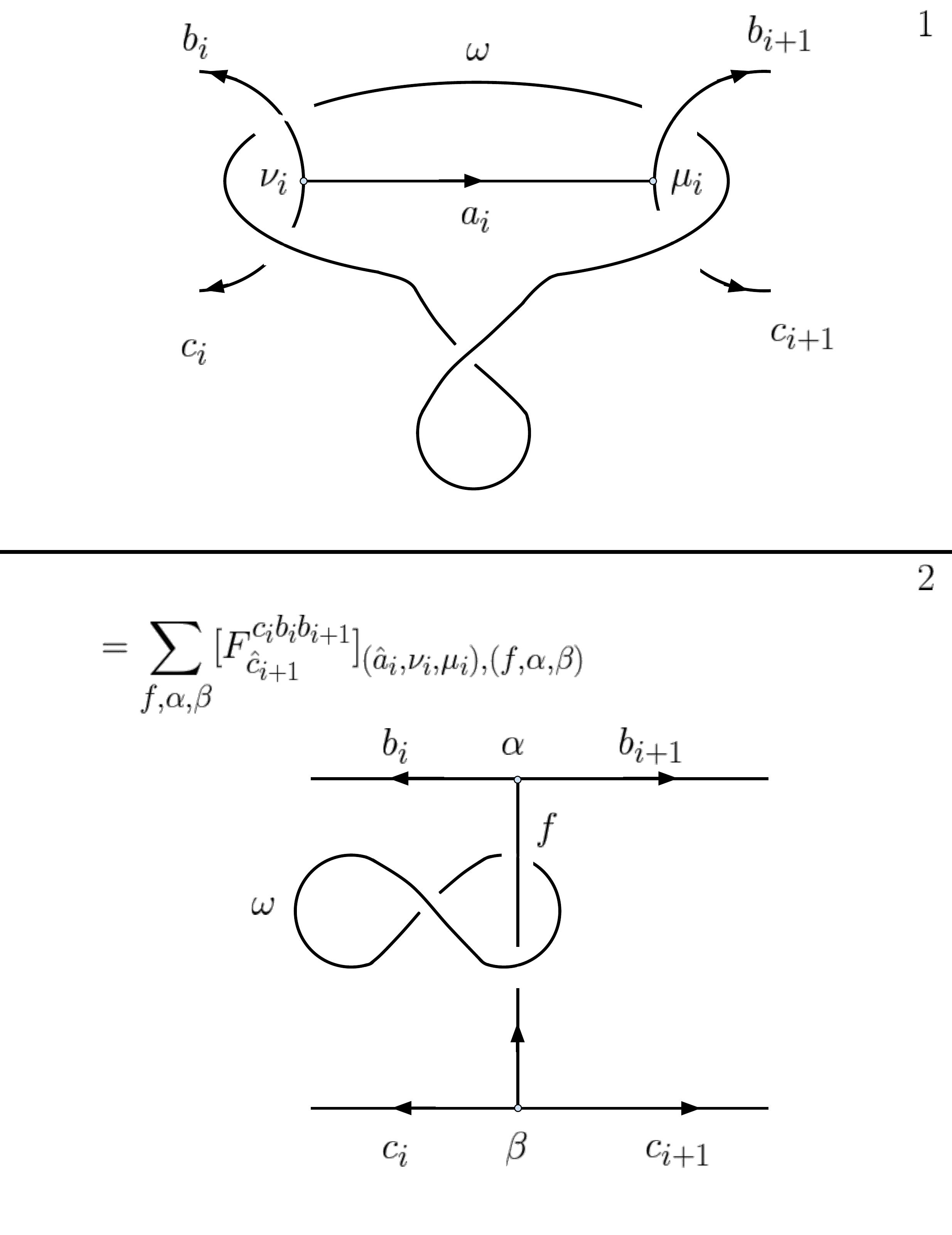}\]
\caption{Steps $1$ and $2$}
\label{T2i+11}
\end{figure}
\begin{figure}[h!]
\[\includegraphics[scale=.5]{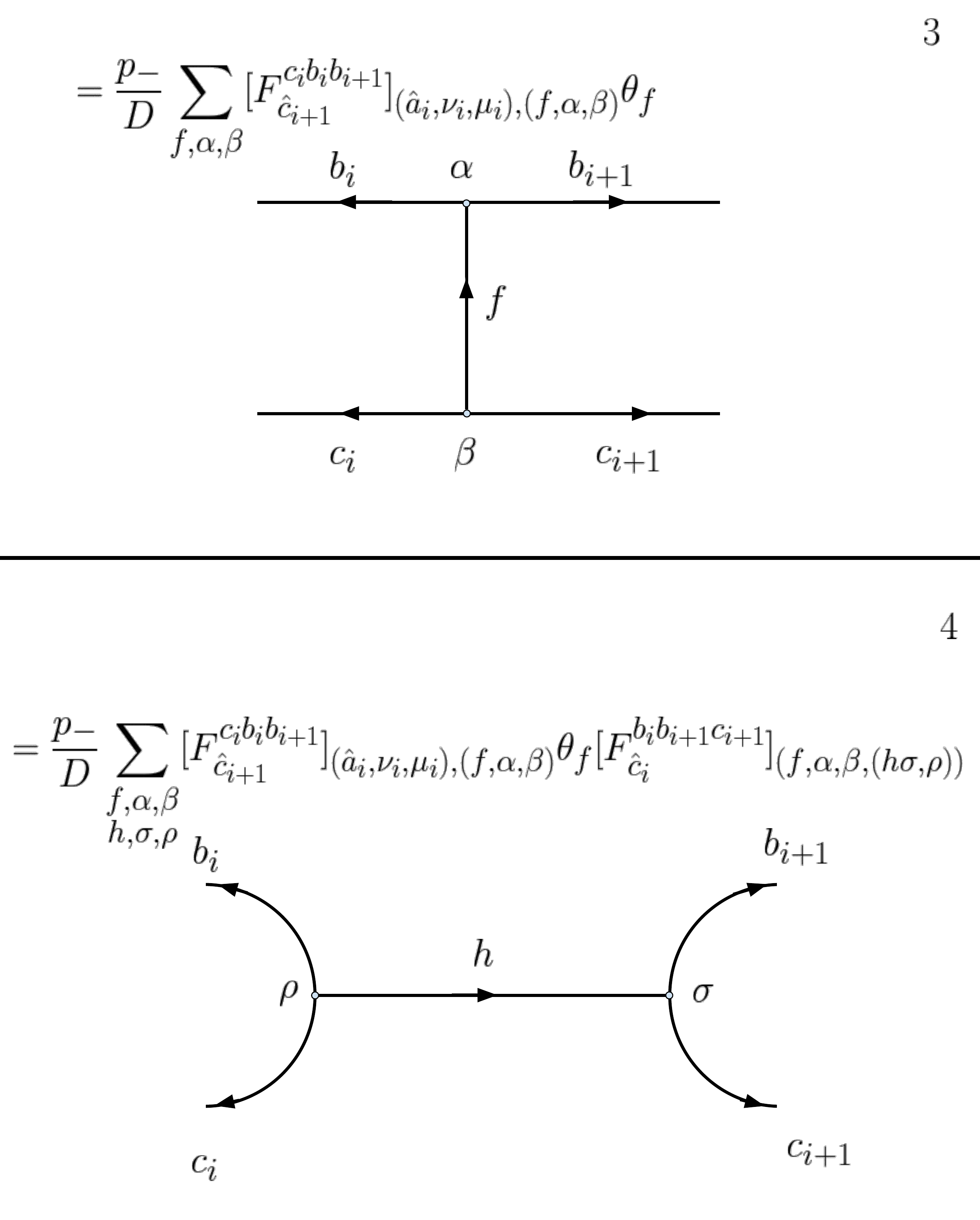}\]
\caption{Steps $3$ and $4$}
\label{T2i+12}
\end{figure}
\newpage
In particular we have 

\[T_{2i+1}(\vec{v})\]
\[=\frac{p_-}{D}\sum_{f,\alpha,\beta,h,\sigma,\rho} [F^{c_ib_ib_{i+1}}_{\hat{c}_{i+1}}]_{(\hat{a}_i,\nu_i,\mu_{i+1}),(f,\alpha,\beta)}\theta_f [F^{b_ib_{i+1}c_{i+1}}_{\hat{c}_{i}}]_{(f,\alpha,\beta),(h,\sigma,\rho)} \vec{v}^\prime_{h,\rho,\sigma},\]
where $\vec{v}^\prime_{h,\rho,\sigma}$ is defined by changing $\vec{v}$ by the following: $a_i$ to $h$, $\nu_i$ to $\rho$, and $\mu_{i+1}$ to $\sigma$.  

\subsection{$T_{2i}$ for $i=1,...,g$}
This computation is much more involved than the previous.  This should be thought of as the generalization of $S-$matrices from the genus $1$ case where the previous examples were more analogous to $T$ matrices.  To begin we look at the evaluation seen in Fig. \ref{ShortCut} which will prove useful in our upcoming computation.  
\begin{figure}[h!]
\[\includegraphics[scale=.35]{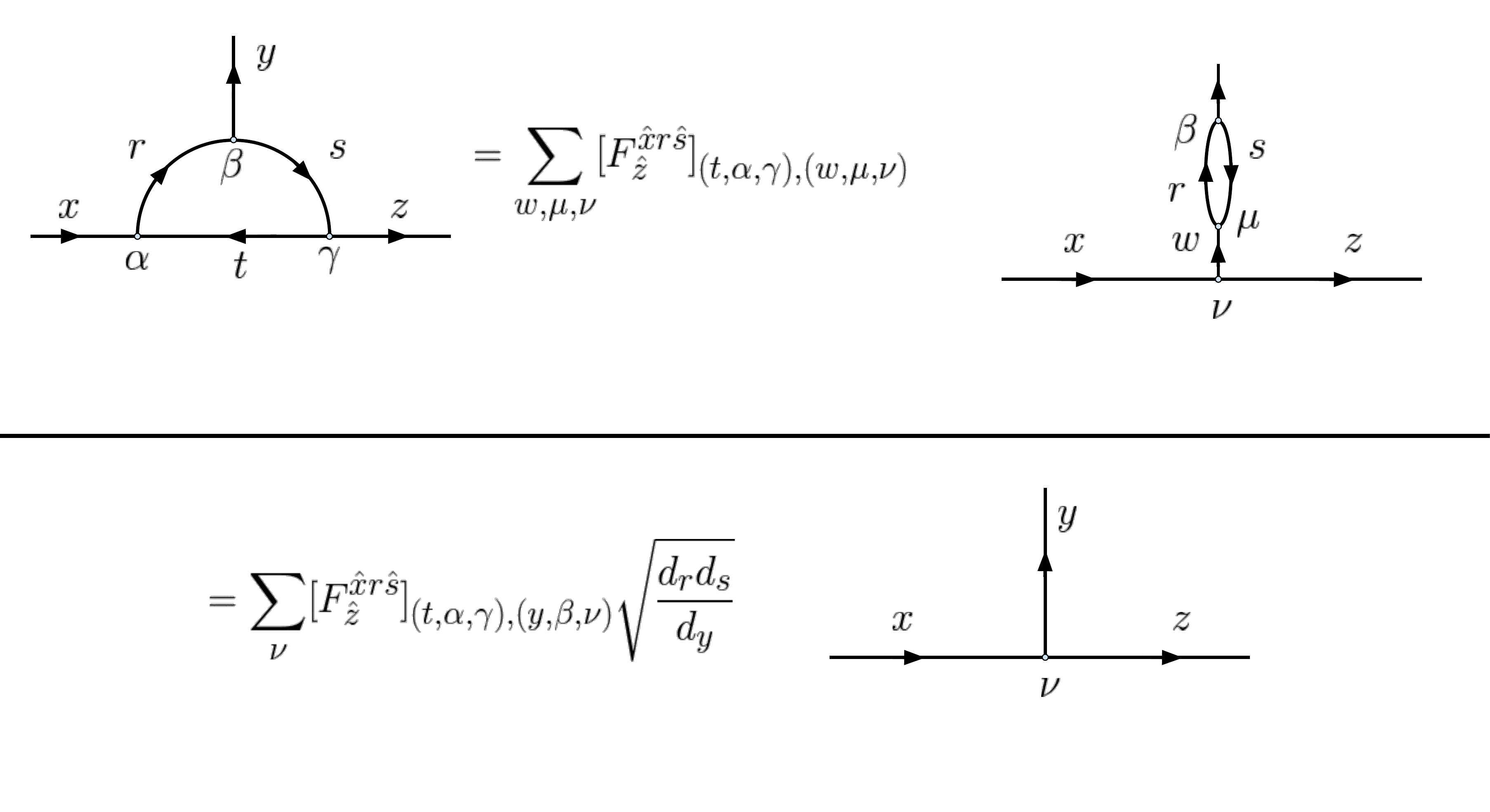}\]
\caption{A useful computation}
\label{ShortCut}
\end{figure}
With this in mind we see in Fig. \ref{Big1} ,Fig. \ref{Big2},Fig. \ref{Big3}, and Fig. \ref{Big4}, a local calculation that allows us to realize the action of $T_{2i}$.
\begin{figure}[h!]
\[\includegraphics[scale=.5]{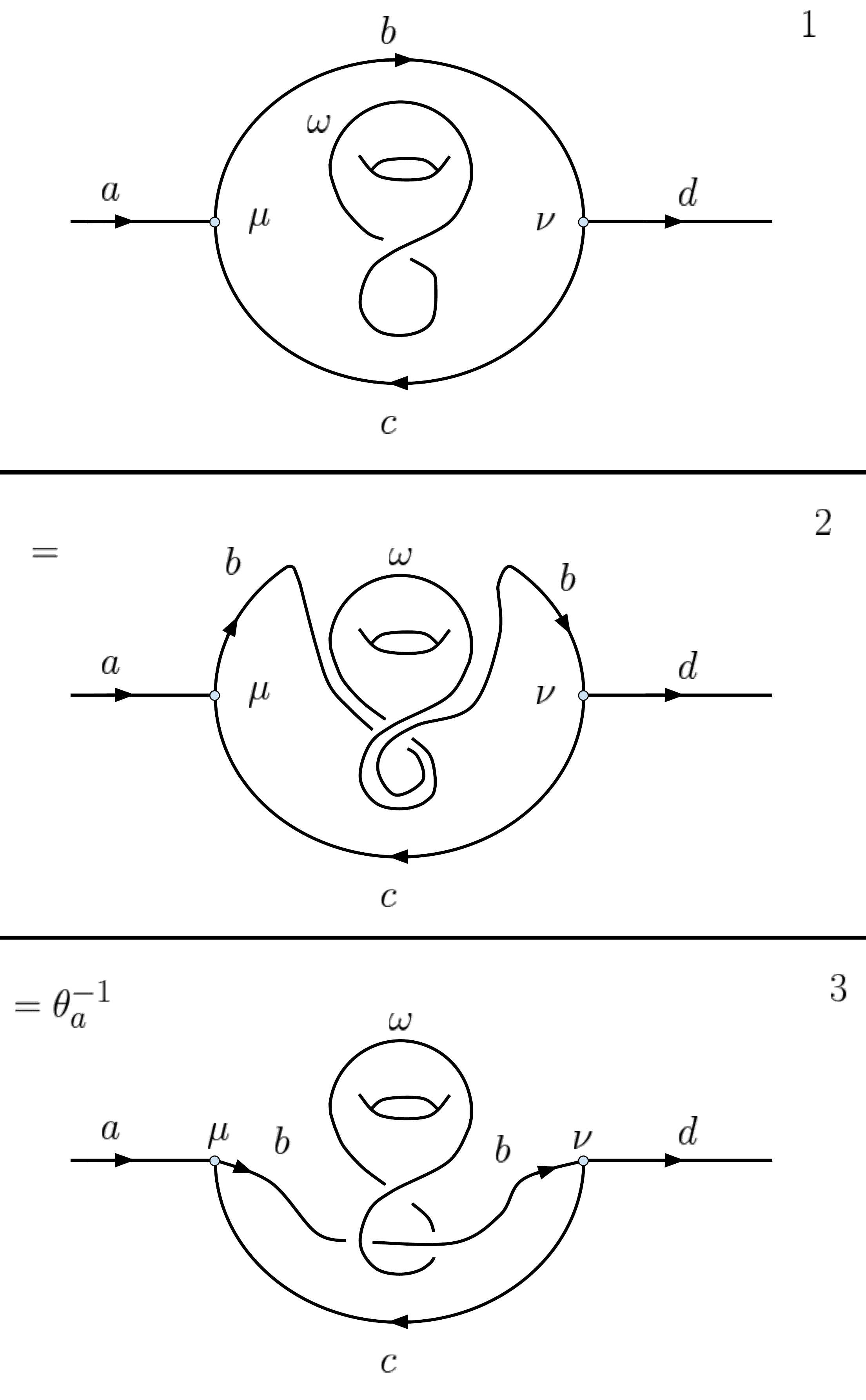}\]
\caption{Steps $1,2,$ and $3$}
\label{Big1}
\end{figure}
\begin{figure}[h!]
\[\includegraphics[scale=.45]{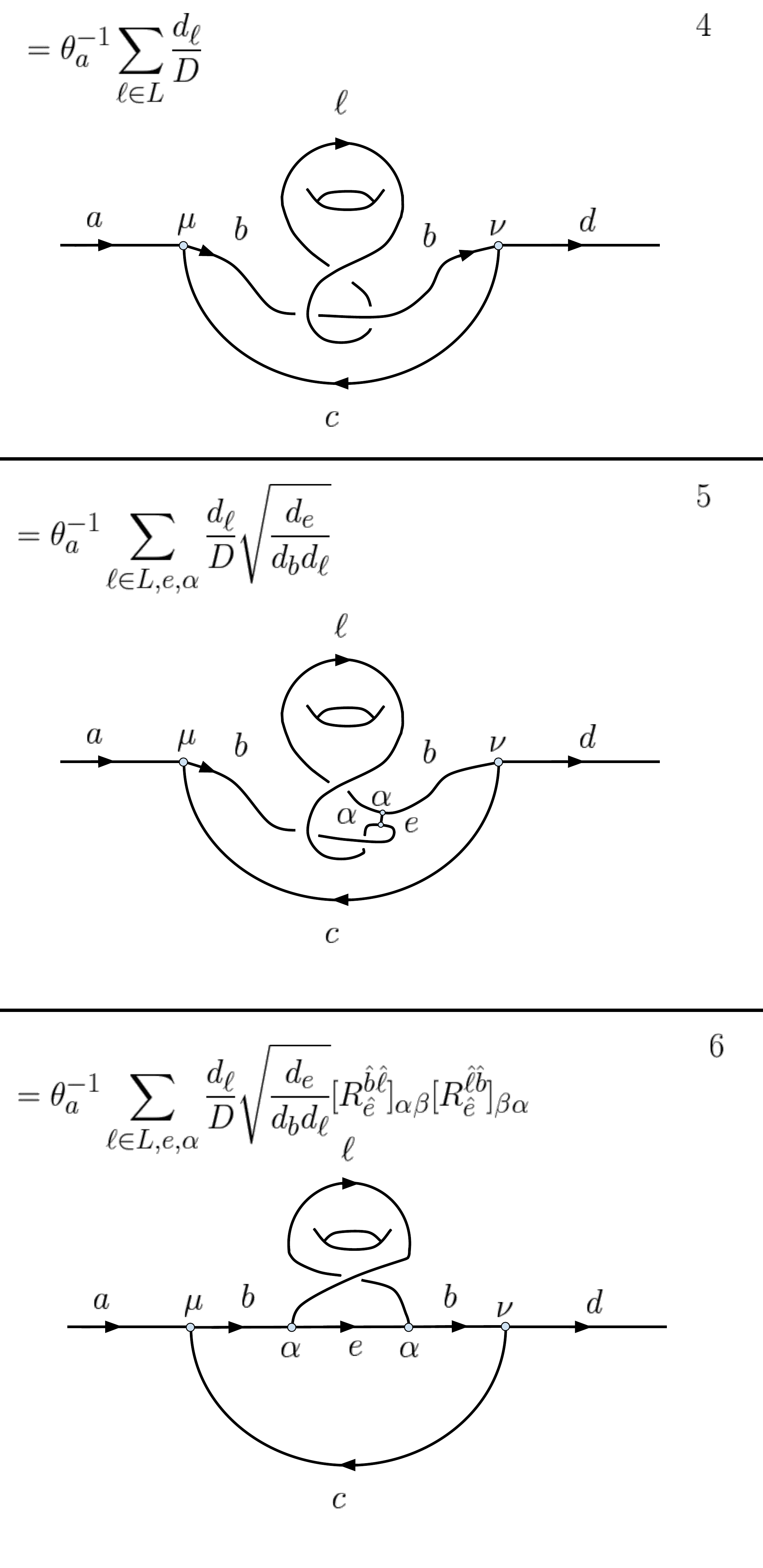}\]
\caption{Steps $4,5,$ and $6$}
\label{Big2}
\end{figure}
\begin{figure}[h!]
\[\includegraphics[scale=.42]{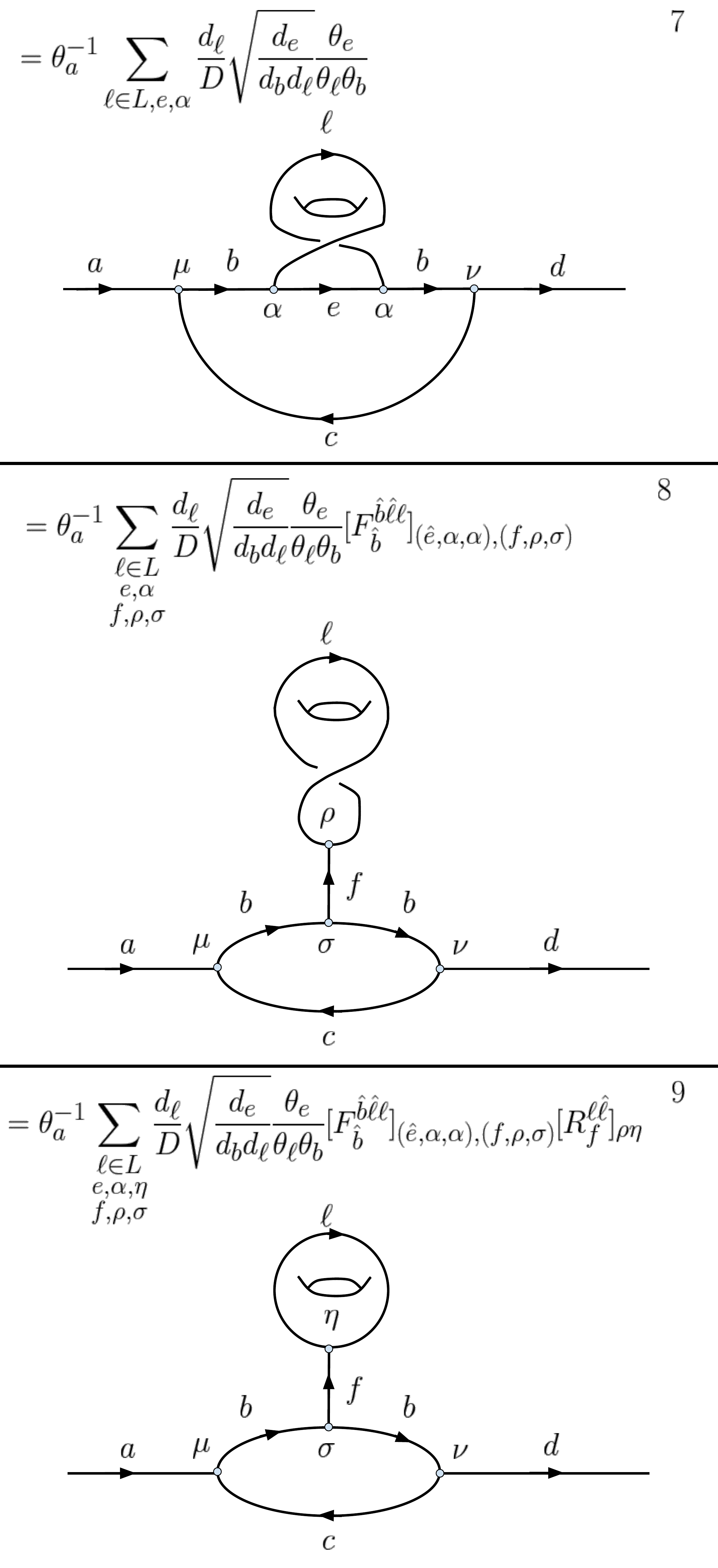}\]
\caption{Steps $7,8,$ and $9$}
\label{Big3}
\end{figure}
\begin{figure}[h!]
\[\includegraphics[scale=.5]{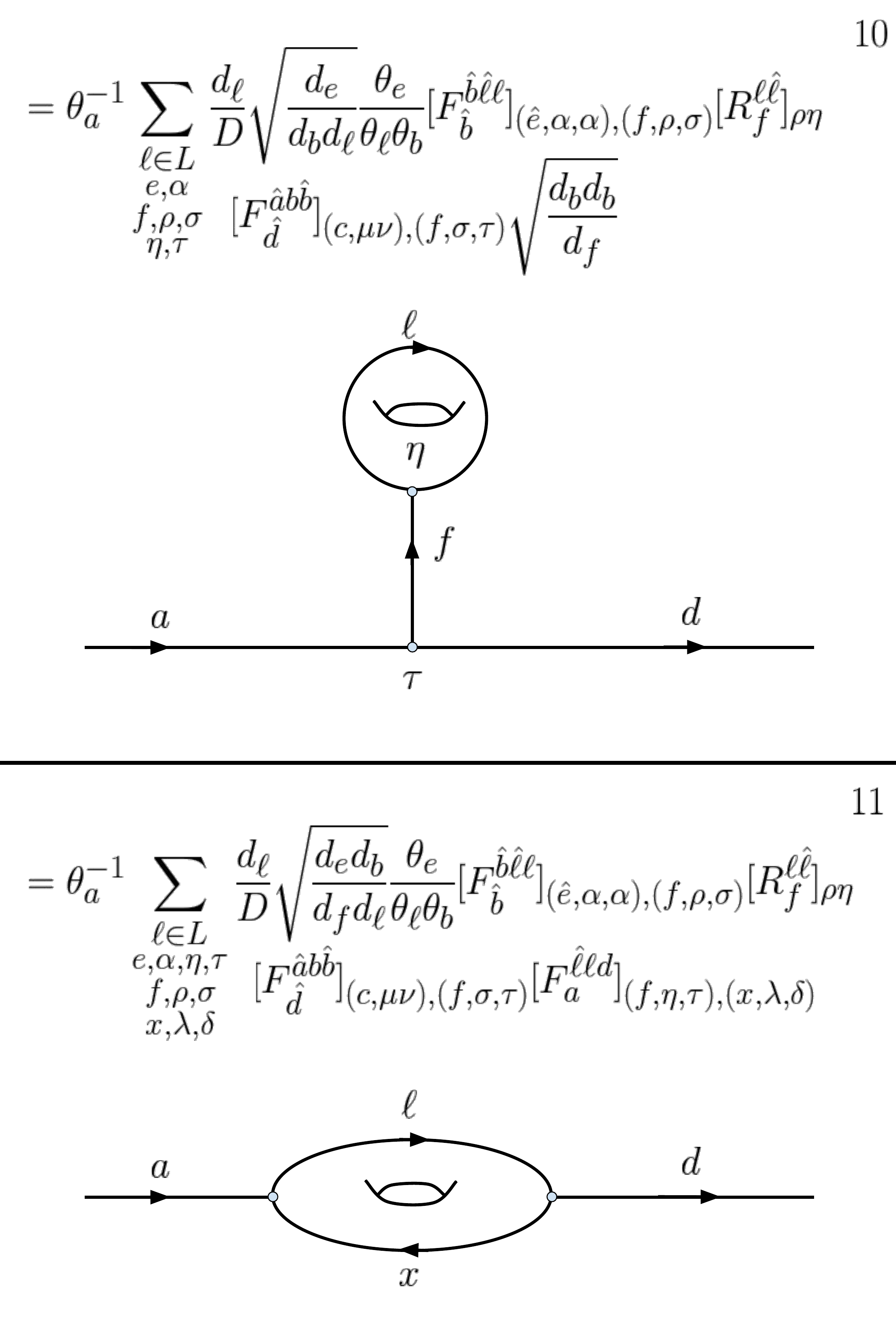}\]
\caption{Steps $10$ and $11$}
\label{Big4}
\end{figure}
\clearpage
Then we see that 
\[T_{2i}(\vec{v})=\theta_{b_i}^{-1}\sum_{\substack{\ell,e,\alpha\\
f,\rho,\sigma,\eta,\tau\\
x,\lambda,\delta}}\frac{d_\ell}{D}\sqrt{\frac{d_ed_{b_i}}{d_{f}d_\ell}}\frac{\theta_e}{\theta_\ell\theta_{b_i}}[F^{\hat{b_i}\hat{\ell}\ell}_{\hat{b_i}}]_{(\hat{e},\alpha,\alpha), (f,\rho,\sigma)}[R^{\ell\hat{\ell}}_f]_{\rho\eta}\]
\[[F^{\hat{a}_{i-1}b_i\hat{b_i}}_{\hat{a_i}}]_{(c_i,\mu_i,\nu_i),(f,\sigma,\tau)}[F^{\hat{\ell}\ell a_i}_{a_{i-1}}]_{(f,\eta,\tau),(x,\lambda,\delta)} \vec{v}^\prime_{\ell,x,\delta,\lambda},\]
where $\vec{v}^\prime_{\ell,x,\delta,\lambda}$ is determined by changing $b_i$ to $\hat{\ell}$, $c_i$ to $\hat{x}$, $\mu_i$ to $\delta$, and $\nu_i$ to $\lambda$.  
\section{Specializing to Abelian Anyon Models}
With these computations in mind we look to ground ourselves with the concrete example discussed in the introduction.  For the remainder of this section let our modular tensor category $\mathcal{C}$ have fusion rules forming a group $G=\mathbb{Z}/m_1\mathbb{Z}\times...\times \mathbb{Z}/m_s\mathbb{Z}$ with $m_i|m_{i+1}$ and modular data determined by $f\in H^3(G,\mathbb{Q}/\mathbb{Z})$ and a pure quadratic form $q$.  
\subsection{Hilbert Spaces of States}
Let $\Sigma_g$ be a closed surface of genus $g$.  We look to describe $V(\Sigma_g)$ concretely.
We note that abelian MTCs are multiplicity free, meaning
\[a\otimes b=a+b=\sum N^{ab}_c c\]
where $N^{ab}_c=\delta_{c,a+b}$, and in particular that the dimension of the $\mathrm{Hom}$ spaces are either $0$ or $1$, meaning we can ignore vertex labels.  Now we also know $\hat{a}=-a$, so we have $a+\hat{a}=0$ and $a+b=0$ exactly when $b=\hat{a}$.  Now we look at the following lemma.
\begin{lem}
When looking at the trivalent graph seen in Fig. \ref{Induct} colored by a finite abelian group $G$.
\begin{figure}[h!]
\[\includegraphics[scale=.4]{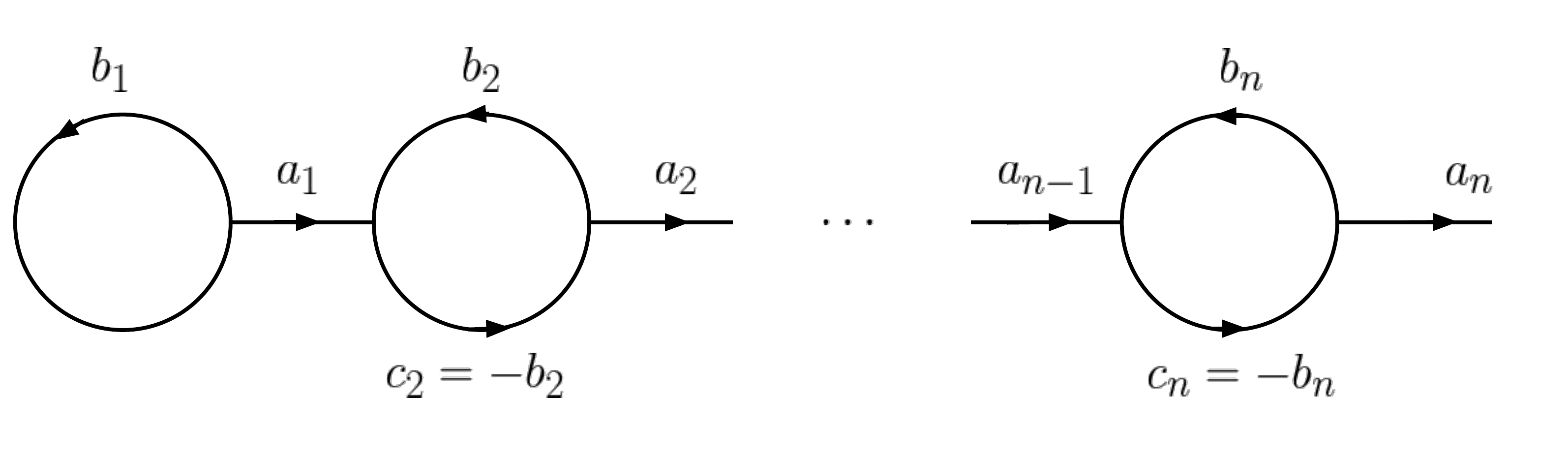}\]
\caption{}
\label{Induct}
\end{figure}
Then $a_i=0$ for all $i$.
\end{lem}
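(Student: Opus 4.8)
The plan is to turn the admissibility conditions at the trivalent vertices of the graph in Figure~\ref{Induct} into linear equations over $G$ and then propagate them along the graph by induction. The structural input, recorded in the lines just above the lemma, is that an abelian MTC is multiplicity-free with $N^{ab}_c=\delta_{c,a+b}$ and $\hat{a}=-a$: accounting for edge orientations and duals, the morphism space at a trivalent vertex whose incident edges carry colors $x,y,z$ is nonzero precisely when $x+y+z=0$ in $G$; in particular $x+\hat{x}+z=0$ forces $z=0$, and a bivalent ``cap'' on colors $x,y$ forces $x=\hat{y}$, i.e.\ $x+y=0$.

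First I would fix a consistent orientation of the edges of the spine in Figure~\ref{Induct} and tabulate the relation imposed at each vertex. These interior relations on their own can only force the $a_i$ to agree with one another --- each handle relates $a_{i-1}$ to $a_i$ --- so the entire content of the lemma sits in a single boundary computation: at the end of the chain a handle edge is paired with its own dual, so the sum of the two handle colors there vanishes, and the adjacent trivalent vertex then forces $a_1=0$. Feeding this back along the chain --- if $a_{i-1}=0$, the vertex relation coupling $a_{i-1}$ to $a_i$ forces $a_i=0$ --- completes the induction and gives $a_i=0$ for all $i$.

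The orientation-and-duals bookkeeping needed to make each vertex relation an honest equation in $G$ rather than one ambiguous up to a sign is routine, and I would not belabor it. The step that needs care, and the only real obstacle, is the base case: one must check that the particular end configuration drawn in Figure~\ref{Induct} genuinely pins an $a$-edge to $0$, rather than merely enforcing that all the $a_i$ coincide, which the interior relations already do. As a sanity check on the statement, once all the $a_i$ vanish the remaining admissible colorings are exactly the free choices $b_i\in G$ with $c_i=\hat{b}_i$, matching the expected $\dim V(\Sigma_g)=|G|^{g}$.
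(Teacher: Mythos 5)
Your argument is correct and is essentially the paper's own proof: the paper merely states ``this is a simple proof by induction,'' and the induction it has in mind is exactly what you spell out --- the end circle, whose loop edge meets the vertex together with its dual, forces the first connecting edge $a_1$ to be $0$, and the two vertex relations around each subsequent handle propagate $a_{i-1}=0$ to $a_i=0$. Your dimension sanity check ($|G|^g$ colorings remaining) matches the proposition the paper deduces immediately afterward, so nothing further is needed.
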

\begin{proof}
This is a simple proof by induction.
\end{proof}
\newpage
Applying this lemma we have:
\begin{prop}
We have $V(\Sigma_g)\cong \mathcal{H}_G^{\otimes g}$, where the basis is given in Fig. \ref{SpecificBasis}.
\begin{figure}[h!]
\[\includegraphics[scale=.45]{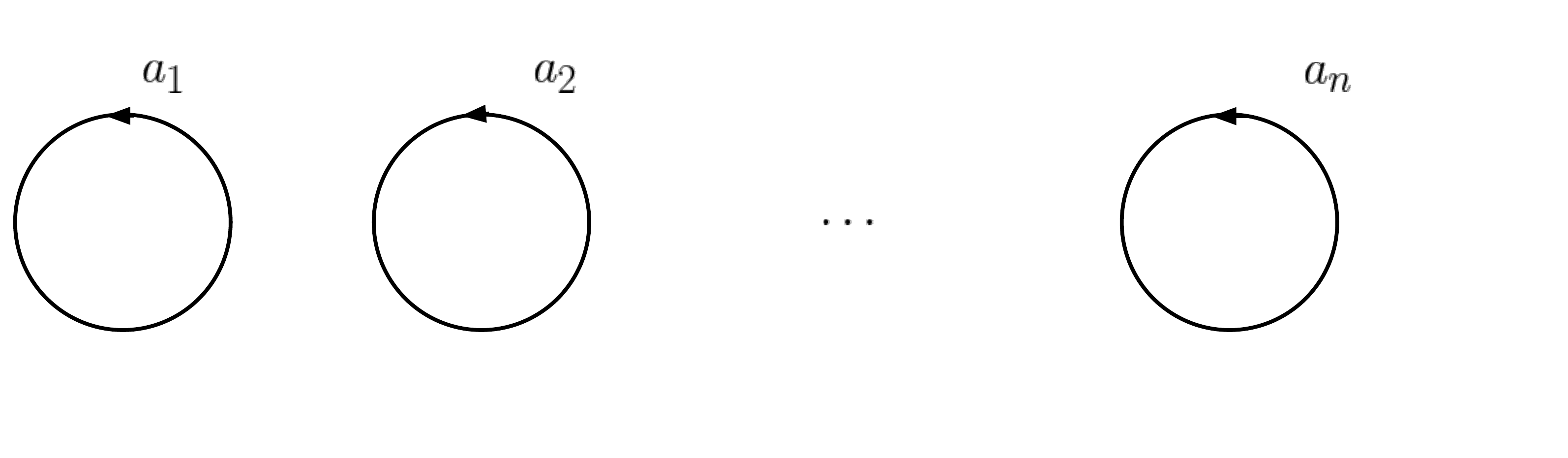}\]
\caption{•}
\label{SpecificBasis}
\end{figure}
\end{prop}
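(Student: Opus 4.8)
The plan is to exhibit an explicit bijection between the admissible $G$-colorings of the spine $S$ of Figure \ref{gbasis} and the set $G^g$, and then to extend it linearly. Since $V(\Sigma_g)$ is by definition spanned by its admissible $\Pi_{\mathcal{C}}$-colorings, while $\mathcal{H}_G^{\otimes g}=\mathcal{H}_{G^g}$ carries the distinguished basis $\{|x\rangle : x\in G^g\}$, any such bijection of index sets automatically yields the desired $\mathbb{C}$-linear isomorphism. The entire content is therefore to identify, among the labels $(\vec a,\vec b,\vec c,\vec\mu,\vec\nu)$, exactly which data is genuinely free.

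First I would discard the vertex labels: because the model is multiplicity free, $N^{ab}_c=\delta_{c,a+b}\in\{0,1\}$, so every admissible trivalent vertex has a one-dimensional morphism space and the $\mu_i,\nu_i$ carry no information. Next I would invoke the preceding Lemma applied to the tree (backbone) part of the spine to force $a_i=0$ for all $i$. It then remains to treat the $g$ cycles. Each cycle edge $c_i$ closes on itself and is joined to the rest of the graph by the stem edge $b_i$; the fusion rule at that attaching vertex reads $N^{c_i\hat c_i}_{b_i}\neq 0$, and since $\hat c_i=-c_i$ and $N^{ab}_c=\delta_{c,a+b}$ this forces $b_i=c_i+\hat c_i=0$. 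Thus, with the vertex labels suppressed and $a_i=b_i=0$ imposed, the only unconstrained labels are the cycle colors $c_1,\dots,c_g\in G$, precisely as drawn in Figure \ref{SpecificBasis}. Sending the basis vector with cycle colors $(c_1,\dots,c_g)$ to $|c_1\rangle\otimes\cdots\otimes|c_g\rangle$ is a bijection on bases and hence the isomorphism $V(\Sigma_g)\cong\mathcal{H}_G^{\otimes g}$. As a consistency check this gives $\dim V(\Sigma_g)=|G|^g$, which agrees with the Verlinde formula $\dim V(\Sigma_g)=\sum_{a\in G}(S_{0a})^{2-2g}=|G|^g$, using $S_{0a}=1/\sqrt{|G|}$ in an abelian model.

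The main obstacle is the local bookkeeping required to be certain that each handle contributes \emph{exactly} one free $G$-label --- neither two (which would wrongly give $|G|^{2g}$) nor zero. Concretely, one must check at every vertex of the chosen spine that the constraints from $N^{ab}_c=\delta_{c,a+b}$, together with the duality convention $\hat a=-a$, collapse the pair $(b_i,c_i)$ to a single free parameter, and confirm that the Lemma's induction genuinely propagates $a_i=0$ across the whole backbone once the stem edges $b_i$ are known to vanish. Finally, one should verify that the surviving label $c_i$ on handle $i$ is the one identified with the $i$-th tensor factor $V(T^2)\cong\mathcal{H}_G$, so that the isomorphism is compatible with the per-handle $\mathrm{SL}(2,\mathbb{Z})$-action described in the introduction; this naturality, rather than the bare dimension match, is what the later sections rely upon.
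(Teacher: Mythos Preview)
Your proof is correct and follows the same approach as the paper, whose entire argument reads ``This is an immediate application of the above lemma.'' You simply supply the details the paper leaves implicit --- the tadpole constraint $b_i=c_i+\hat c_i=0$, the propagation of $a_i=0$ along the backbone, and the Verlinde sanity check --- but the route is identical.
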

\begin{proof}
This is an immediate application of the above lemma.
\end{proof}
We will denote an element of the basis shown in Fig. \ref{SpecificBasis} as $\vec{a}=(a_1,...,a_g)$.  

\subsection{The $\mathrm{MCG}$ Action}
\subsubsection{$T_0$ and $T_1$}
This computation is identical to that of the general setting.
And so \[T_0(\vec{a})=\frac{p_-}{D}\theta_{a_2}\vec{a}=\frac{p_-}{D}\omega^{a_2^2}\vec{a}\]
and 
 \[T_1(\vec{a})=\frac{p_-}{D}\theta_{a_1}\vec{a}=\frac{p_-}{D}\omega^{a_1^2}\vec{a}.\]
 
Now define
\[L:\mathcal{H}_G\rightarrow\mathcal{H}_G\]
defined by 
\[L(|a\rangle)=\theta_a |a\rangle\]
Then we have
\[T_0=L_2(\vec{a})\]
and
\[T_1=L_1(\vec{a})\]
where 
\[L_i=Id\otimes Id\otimes...\otimes L\otimes Id\otimes...\otimes Id\]
where $L$ acts on the $i^{th}$ component.
\par 
This additional notation is introduced as while $T_0$ and $T_1$ act on $\mathcal{H}_G^{\otimes g}$ they can both be described as acting on a single component $\mathcal{H}_G$.  The above defined $L$ is exactly this map defined abstractly to act on $\mathcal{H}_G$.  
\subsubsection{$T_{2i+1}$ for $i=1,...,g-1$}
This computation is also identical, but we are able to make use of the explicit F-moves.  In particular we have
\[T_{2i+1}(\vec{a})\]
\[=\frac{p_-}{D}\sum_{f,h} [F^{(-a_i)a_i(-a_{i+1})}_{-a_{i+1}}]_{(0,f)}\theta_f [F^{a_i(-a_{i+1})a_{i+1}}_{a_{i}}]_{(f,h)} \vec{a}^\prime_{h},\]
but we have the only non-zero F-move is 
\[[F^{a,b,c}_{a+b+c}]_{a+b,b+c}=f(a,b,c)\in H^3(G,U(1))\]
Now we also note that we elected to describe all F-moves as positive powers, but actually
\[[F^{a_i(-a_{i+1})a_{i+1}}_{a_{i}}]=[F^{(-a_i)a_i(-a_{i+1})}_{-a_{i+1}}]^{-1}\]
\[T_{2i+1}(\vec{a})=f(a_i,-a_{i+1},a_{i+1})\theta_{a_i-a_{i+1}} \overline{f(a_i,-a_{i+1},a_{i+1})}\vec{a}=\theta_{a_i-a_{i+1}}\vec{a}\]
Now define
\[M:\mathcal{H}_G\otimes \mathcal{H}_G\rightarrow \mathcal{H}_G\otimes\mathcal{H}_G,\]
defined by
\[M(|a\rangle\otimes |b\rangle)=\theta_{a-b}|a\rangle\otimes |b\rangle.\]
Observe that $M$ can also be described as follows:
\[M:\mathcal{H}_{G\oplus G}\rightarrow \mathcal{H}_{G\oplus G},\]
where
\[M(|a+b\rangle)=\theta_{a-b}|a+b\rangle.\]

Then we have 
\[T_{2i+1}(\vec{a})=M_{i,i+1}(\vec{a}).\]
\subsubsection{$T_{2i}$ for $i=1,...,g$}
In Fig. \ref{pSpec1} and Fig. \ref{pSpec2} we provide an alternative version of this computation which utilizes many of the specific properties of the fusion rules for abelian MTCs which allow for the use of shortcuts. 
\begin{figure}[h!]
\[\includegraphics[scale=.5]{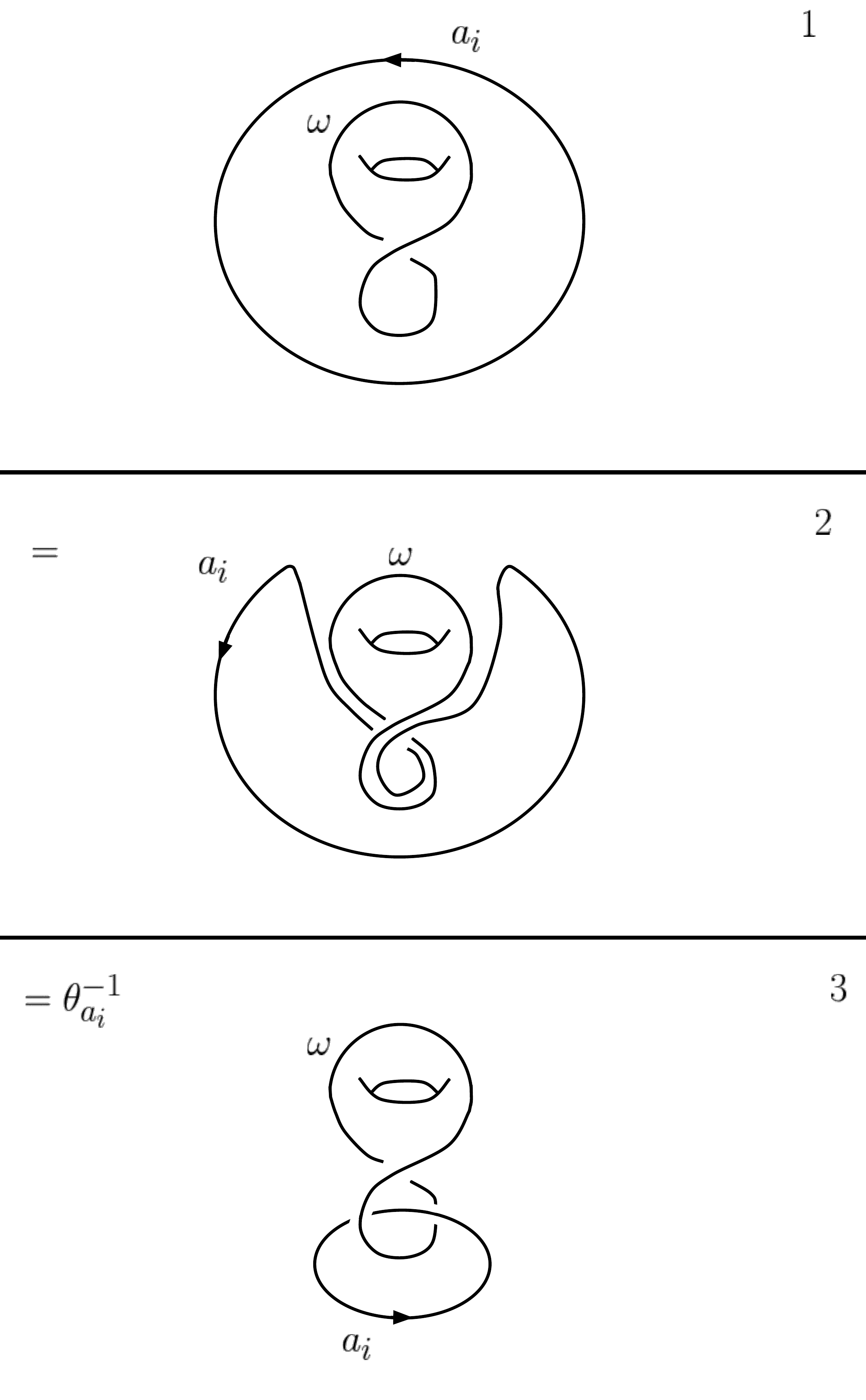}\]
\caption{Steps $1,2,$ and $3$}
\label{pSpec1}
\end{figure} 
\begin{figure}[h!]
\[\includegraphics[scale=.65]{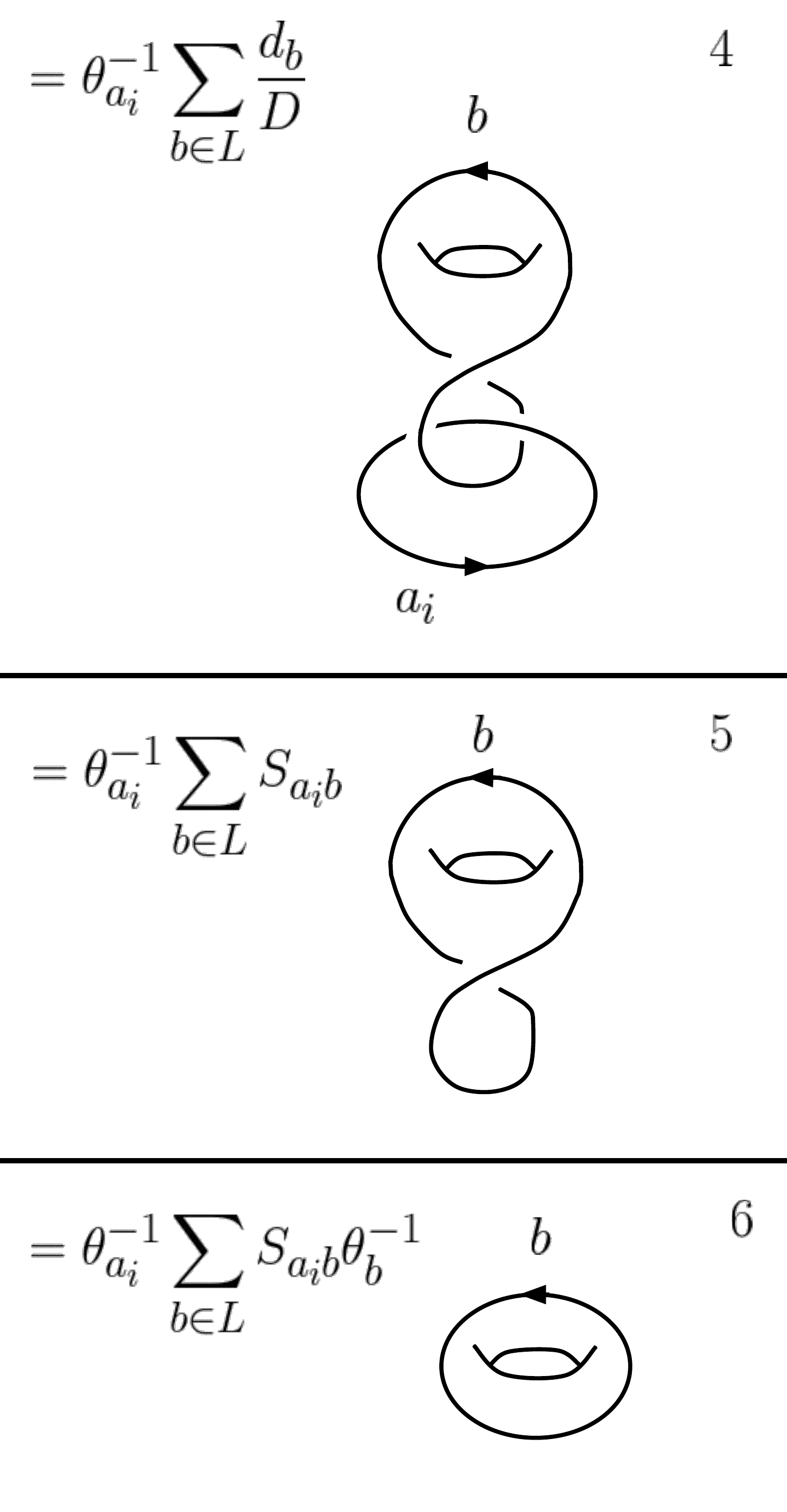}\]
\caption{Steps $4,5,$ and $6$}
\label{pSpec2}
\end{figure}
\clearpage
\clearpage
Then we can see that 
\[T_{2i}(\vec{a})=\theta_{a_i}^{-1}\sum_{b\in G} S_{a_i b}\theta_b^{-1} \vec{a}^\prime_{b}\]
Where $\vec{a}^\prime_b=(a_1,..,a_{i-1},b,a_{i+1},..,a_g)$ is determined from $\vec{a}$ by replacing the $i^th$ coordinate from $a_i$ to $b$.
Now define 
\[O:\mathcal{H}_G\rightarrow\mathcal{H}_G\]
defined by 
\[O(|a\rangle)=\sum_{b\in G}\theta_a^{-1}S_{a,b}\theta^{-1}_b|b\rangle.\]
Then we have that
\[T_{2i}(\vec{a})=O_i(\vec{a}).\]

\subsection{Clifford Operators}
\begin{theorem}
Let $\Sigma_g$ be the closed surface of genus $g$ and 
\[\rho_G:\mathrm{MCG}(\Sigma_g)\rightarrow PU(\mathcal{H}_G)\]
$\rho_G$ be the quantum representation coming from an abelian MTC with fusion rules determined by a finite abelian group $G$.  Then 
\[\rho_G(\mathrm{MCG}(\Sigma_g))\leq C_{g,G},\]
meaning the image of the mapping class group under this representation lies entirely in the $g^{th}$ Clifford group over $G$.  Moreover, each Humphries generator is sent to a Normalizer circuit over $\bigoplus_{i=1}^g G$.  
\end{theorem}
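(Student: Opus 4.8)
The plan is to work entirely from the formulas for the Humphries generators obtained earlier in this section and to exhibit each one as a normalizer circuit over $G^g:=\bigoplus_{i=1}^g G$. Once that is done, the first of the two Van den Nest theorems quoted above (the subgroup generated by normalizer gates over $G^g$ lies in $C_{g,G}$) applies: since the positive Dehn twists about the $2g+1$ Humphries curves generate $\mathrm{MCG}(\Sigma_g)$, their images $T_0,\dots,T_{2g}$ generate $\rho_G(\mathrm{MCG}(\Sigma_g))$, which therefore lies in $C_{g,G}$. As the target of $\rho_G$ is $PU$, the overall root-of-unity scalar $p_-/D=e^{-c\pi i/4}$ occurring in $T_0,T_1,T_{2i+1}$ may be dropped throughout.

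For the twist-type generators this is almost immediate from $T_1=L_1$, $T_0=L_2$, $T_{2i+1}=M_{i,i+1}$, where $L(|a\rangle)=\theta_a|a\rangle$ and $M(|a\rangle\otimes|b\rangle)=\theta_{a-b}|a\rangle\otimes|b\rangle$. Writing $\theta_a=e^{2\pi i q(a)}$ with $q$ the pure quadratic form of $\mathcal{C}$, the map $a\mapsto\theta_a$ is a quadratic phase function on $G$, since $q$ being quadratic makes $q(a+a')-q(a)-q(a')$ bilinear; and $\theta_{a-b}$ comes from $q$ pulled back along the subtraction homomorphism $G\oplus G\to G$, hence is again a quadratic phase function, now on $G\oplus G$. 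Thus $L$ is a quadratic phase gate over $G$ and $M$ one over $G\oplus G$; regarded as the operators $L_i$ and $M_{i,i+1}$ on $\mathcal{H}_{G^g}$ acting on one, resp.\ two, tensor factors, they are quadratic phase gates over $G^g$, that is, one-gate normalizer circuits.

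The substantive case is the $S$-type generator $T_{2i}=O_i$ with $O(|a\rangle)=\sum_{b\in G}\theta_a^{-1}S_{a,b}\theta_b^{-1}|b\rangle$. Inserting the abelian $S$-matrix $S_{a,b}=\tfrac1D e^{-2\pi i b(a,b)}$, $D=\sqrt{|G|}$, recorded in the preliminaries,
\[O(|a\rangle)=\theta_a^{-1}\,\tfrac1D\sum_{b\in G}e^{-2\pi i b(a,b)}\,\theta_b^{-1}\,|b\rangle .\]
I would then factor $O=Q\circ\mathcal{F}\circ A_{-\phi}\circ Q$, read right to left, where $Q$ is the quadratic phase gate $|a\rangle\mapsto\theta_a^{-1}|a\rangle$, $\mathcal{F}$ is the quantum Fourier transform over $G$, and $A_{-\phi}$ is the group-automorphism gate $|a\rangle\mapsto|-\phi(a)\rangle$. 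The crux is that modularity of $\mathcal{C}$ makes $S$ invertible, forcing the symmetric bilinear form $b(\cdot,\cdot)\colon G\times G\to\mathbb{Q}/\mathbb{Z}$ to be nondegenerate; hence $a\mapsto b(a,\cdot)$ is an isomorphism $G\xrightarrow{\sim}\widehat{G}$, and composing it with the standard self-duality of $G=\mathbb{Z}/m_1\mathbb{Z}\times\cdots\times\mathbb{Z}/m_s\mathbb{Z}$ built into $\mathcal{F}$ produces $\phi\in\mathrm{Aut}(G)$ with $e^{2\pi i b(a,b)}=\chi_b(\phi(a))$. Symmetry of the standard pairing then gives $\sum_b e^{-2\pi i b(a,b)}|b\rangle=\sum_b\chi_b(-\phi(a))|b\rangle$, which up to the normalization constant is $\mathcal{F}(|-\phi(a)\rangle)$; precomposing with $Q$ reinserts $\theta_a^{-1}$ and postcomposing with $Q$ reinserts $\theta_b^{-1}$, recovering $O$. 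So $O$, and hence $O_i$ on $\mathcal{H}_{G^g}$, is a normalizer circuit that uses a single Fourier transform.

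Combining the three cases, all $2g+1$ Humphries generators are normalizer circuits over $G^g$, whence $\rho_G(\mathrm{MCG}(\Sigma_g))\le C_{g,G}$. I expect the one genuine obstacle to be the middle step of the $T_{2i}$ analysis: reading off the precise abelian $S$-matrix from the modular data, checking that modularity really does force $b(\cdot,\cdot)$ to be nondegenerate (so that it differs from the standard pairing of $G$ only by an automorphism), and handling the conjugate-character and normalization conventions in $\mathcal{F}$ carefully enough that the twisted sum $\sum_b e^{-2\pi i b(a,b)}|b\rangle$ is recognized as a genuine Fourier transform precomposed with a group-automorphism gate. Everything else is a direct transcription of the formulas already derived.
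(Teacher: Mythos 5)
Your proposal is correct and follows essentially the same route as the paper: discard the projective scalar $p_-/D$, show $L$ and $M$ are quadratic phase gates via the bicharacter $\sqrt{|G|}\,S_{x,y}=e^{2\pi i b(x,y)}$ (the paper's Lemma on $S$), write $O$ as quadratic phase gates composed with a Fourier transform, and then invoke Van den Nest. The only difference is one of bookkeeping: where the paper identifies $\sqrt{|G|}\,S$ directly with the global quantum Fourier transform (with characters labelled through the nondegenerate pairing $b$), you insert an explicit group-automorphism gate $A_{-\phi}$ to pass to the standard self-duality of $G$, a slightly more careful rendering of the same step.
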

\begin{cor}
The computational framework using mapping class group representations arising from abelian anyon models can be classically efficiently simulated in at most polynomial time in the number of Dehn twists about Humphries generators of type $\gamma_{2i}$, the number of gates in the circuit, the number of cyclic factors of the group, and the logarithm of the orders of the cyclic factors. 
\end{cor}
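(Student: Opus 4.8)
\section*{Proof proposal for the Corollary}

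The plan is to derive the Corollary by combining the structural content of the Theorem with Van den Nest's simulation result, Theorem~\ref{Vanden}, while tracking carefully the one parameter that governs the complexity bound. By the Theorem, each Humphries generator is sent by $\rho_G$ to a normalizer circuit over $\bigoplus_{i=1}^g G$. An arbitrary gate produced in this framework is $\rho_G$ applied to some mapping class, which can be written as a word in the Humphries generators and their inverses; since a composition of normalizer gates is again a normalizer circuit, every gate occurring in such a computation is a normalizer circuit. Theorem~\ref{Vanden} then applies verbatim and yields a classical simulation running in time polynomial in the number of quantum Fourier transforms, the number of gates in the circuit, the number $s$ of cyclic factors of $G$, and the logarithms of the orders $m_i$. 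The only nontrivial task is to identify the first of these parameters with the quantity named in the Corollary, namely the number of Dehn twists about curves of type $\gamma_{2i}$.

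First I would return to the explicit forms computed in Section~4. The operators $T_0=L_2$, $T_1=L_1$, and $T_{2i+1}=M_{i,i+1}$ are all diagonal in the standard basis, with $L(|a\rangle)=\theta_a|a\rangle=\omega^{a^2}|a\rangle$ and $M(|a+b\rangle)=\theta_{a-b}|a+b\rangle$. A short check exhibits each as a quadratic phase gate: for $L$ the phase $\omega^{a^2}$ satisfies $\omega^{(a+a')^2}=\omega^{a^2}\omega^{a'^2}\,\omega^{2aa'}$ with $\omega^{2aa'}$ bilinear, and the same computation applied to the variable $a-b$ shows that $\zeta(a,b)=\omega^{(a-b)^2}$ is quadratic over $G\oplus G$. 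In particular none of $T_0$, $T_1$, $T_{2i+1}$ contributes a quantum Fourier transform.

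The decisive step is the analysis of $T_{2i}=O_i$, where
\[
O(|a\rangle)=\sum_{b\in G}\theta_a^{-1}S_{a,b}\theta_b^{-1}|b\rangle .
\]
Here I would use the explicit abelian $S$-matrix, $S_{a,b}=\tfrac1D\,\overline{\chi_a(b)}$ with $\chi_a(b)$ the character determined by the bilinear form $b(\cdot,\cdot)$ associated to $q$, to factor $O$ as a single (inverse) quantum Fourier transform conjugated by the diagonal quadratic phase gates $|a\rangle\mapsto\theta_a^{-1}|a\rangle$. Thus $O$ contains exactly one Fourier transform, whereas every other generator contains none. Consequently, in any word representing a mapping class the total number of quantum Fourier transforms in the associated normalizer circuit equals the number of occurrences of $\gamma_{2i}$-type Dehn twists. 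Substituting this identification into the complexity bound of Theorem~\ref{Vanden} produces precisely the statement of the Corollary.

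The main obstacle I anticipate is the bookkeeping in the factorization of $O$: one must verify that the one-sided multiplications by $\theta_a^{-1}$ and $\theta_b^{-1}$ are genuine quadratic phase gates in the sense of the normalizer gate definition, and that $S_{a,b}$ is, up to overall normalization and complex conjugation, exactly the Fourier kernel $\chi_a(b)/|G|$ for the same pairing used by $\mathcal F$. Since the representation is projective, the discrepancy in normalization and any global phase are immaterial. Once this factorization is confirmed, the count of Fourier transforms is forced to equal the number of $\gamma_{2i}$ twists, and the remaining parameters---gate count, number of cyclic factors, and $\log m_i$---are inherited directly from Theorem~\ref{Vanden}.
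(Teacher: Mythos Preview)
Your proposal is correct and follows the same route as the paper: invoke the Theorem to see that every Humphries generator acts by a normalizer circuit, apply Van den Nest's Theorem~\ref{Vanden}, and identify the quantum Fourier transforms with the Dehn twists about curves of type $\gamma_{2i}$. The paper's proof is a two-sentence sketch of exactly this, deferring the factorization of $O$ into quadratic phases and a single Fourier transform (and the diagonality of $L$, $M$) to the subsequent proof of the Theorem; your version simply makes that bookkeeping explicit.
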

\begin{proof}
As only normalizer gates can be achieved this follows from the theorem of Van den Nest, Theorem \ref{Vanden}.  Also note that the implementation of quantum Fourier transforms is obtained through the image of Dehn twists about Humphries generators of type $\gamma_{2i}$.
\end{proof}
\begin{lem}
$\sqrt{|G|}S_{x,y}$ is a bi-character, meaning
\[|\sqrt{|G|}S_{x,y}|=1\]
\[\sqrt{|G|}S_{x+y,g}=\sqrt{|G|}S_{x,g}\sqrt{|G|}S_{y,g}\]
\[\sqrt{|G|}S_{g,x+y}=\sqrt{|G|}S_{g,x}\sqrt{|G|}S_{g,y}\]
\end{lem}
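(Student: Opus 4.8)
The plan is to reduce everything to the explicit form of the $S$-matrix for an abelian MTC recorded above (the \texttt{GroupSMat} figure, following \cite{Sti}). Since every quantum dimension equals $1$, the total quantum dimension is $D=\sqrt{\sum_{a\in G}d_a^2}=\sqrt{|G|}$, and the $S$-matrix entries take the form $S_{x,y}=\tfrac{1}{\sqrt{|G|}}\,e^{2\pi i\,b(x,y)}$, where $b(x,y)=q(x+y)-q(x)-q(y)$ is the symmetric bilinear form attached to the pure quadratic form $q\colon G\to\mathbb{R}/\mathbb{Z}$. Consequently $\sqrt{|G|}\,S_{x,y}=e^{2\pi i\,b(x,y)}$, and the first assertion $|\sqrt{|G|}\,S_{x,y}|=1$ is immediate.

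For the two multiplicativity statements I would invoke the defining property of a quadratic form: $b$ is bi-additive as a map $G\times G\to\mathbb{R}/\mathbb{Z}$. Concretely, $b(x+y,g)=b(x,g)+b(y,g)$ follows by expanding $q(x+y+g)$ and cancelling, using $q(-z)=q(z)$, which holds because $q$ is pure quadratic ($q(nz)=n^2q(z)$); this is precisely the condition that makes $q$ a quadratic form rather than merely a function, and is part of the data listed from \cite{Sti}. Exponentiating, $\sqrt{|G|}\,S_{x+y,g}=e^{2\pi i\,b(x+y,g)}=e^{2\pi i\,b(x,g)}\,e^{2\pi i\,b(y,g)}=\bigl(\sqrt{|G|}\,S_{x,g}\bigr)\bigl(\sqrt{|G|}\,S_{y,g}\bigr)$. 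The identity in the second slot is then either a verbatim copy of the same argument or follows at once from the symmetry $b(x,y)=b(y,x)$.

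There is essentially no substantive obstacle here; the only points requiring care are bookkeeping. First, one must pin down the normalization and sign in the $S$-matrix formula consistently with the conventions fixed earlier (whether $e^{2\pi i b}$ or $e^{-2\pi i b}$, and possibly an overall central-charge phase); since a global phase and an overall sign affect neither modulus-$1$-ness nor multiplicativity, the conclusion is insensitive to these choices. Second, one should read the additivity of $b$ in $\mathbb{R}/\mathbb{Z}$ over the finite group $G$, i.e.\ check that torsion causes no problem, which is automatic from the axioms of a pure quadratic form. I would therefore present the lemma as a one-line consequence of the explicit $S$-matrix together with the bilinearity of $b$, after which it is exactly the data needed to recognize the gate $O$ as a quantum Fourier transform twisted by the diagonal maps built from $\theta$.
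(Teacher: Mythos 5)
Your proposal is correct and follows exactly the paper's argument: write $\sqrt{|G|}\,S_{x,y}=\exp(2\pi i\,b(x,y))$ with $b(x,y)=q(x+y)-q(x)-q(y)$ the bilinear form attached to the pure quadratic form, and read off unit modulus and multiplicativity in each slot from bilinearity. The extra bookkeeping you include (why $b$ is bi-additive, insensitivity to normalization conventions) is a harmless elaboration of the same one-line proof the paper gives.
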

\begin{proof}
This follows immediately as 
\[\sqrt{|G|}S_{x,y}=\exp(2\pi i b(x,y))\]
where $b(x,y)$ is bilinear.
\end{proof}
Now we return to the proof of our theorem.
\begin{proof}

We see that based on the structure computed above we need only show that $L$, $M$, and $O$ lie $C_{1,G}$, $C_{2,G}$, and $C_{1,G}$ respectively as tensoring with the identity operator will preserve that result and the root of unity $\frac{p_-}{D}$ can be ignored as these operators are only considered projectively.
\subsubsection{L}
Recall
\[L(|x\rangle)=\theta_x |x\rangle.\]
We first look to show that $L$ lies in $C_{1,G}$, and in particular that $L$ is a normalizer gate over $G$.  In fact we will show that $L$ is a quadratic phase gate, meaning $\theta_x$ is a quadratic phase.  We first note that $\theta_x$ is a root of unity, by Vafa's theorem, thus we need only show that 
\[\theta_{x+y}=\theta_x\theta_y B(x,y)\]
In fact we have 
\[\frac{\theta_{x+y}}{\theta_x\theta_y}=\sqrt{|G|}S_{x,y}\]
which as we have seen in Lemma $4.3$ is a bicharacter.  Thus $L$ is a quadratic phase gate and thus a normalizer gate.
\subsubsection{M}
Recall
\[M(|x\rangle\otimes |y\rangle)=\theta_{x-y}|x\rangle\otimes |y\rangle.\]
We we look to show that $M\in C_{2,G}$. In particular, we will show that $M$ is a normalizer circuit over $G\oplus G$.  We have 
\[M(|x+y\rangle)=\theta_{x-y}|x-y\rangle.\]
Then our computation will follow very similar to that of the one above.  We must show that 
\[\theta_{(x+a)-(y+b)}=\theta_{x-y}\theta_{a-b}B((x,y),(a,b)),\]
where $B((x,y),(a,b))$ is a bicharacter.
We have 
\[\theta_{(x+a)-(y+b)}=\theta_{(x-y)+(a-b)}=\theta_{x-y}\theta_{a-b}\sqrt{|G|}S_{x-y,a-b}.\]
We are left to show that 
\[B((x,y),(a,b))=\sqrt{|G|}S_{x-y,a-b}\]
is a bicharacter.  We have
\begin{align*}
&B((x+g,y+h),(a,b))=\sqrt{|G|}S_{(x+g)-(y+h),a-b}\\
& =\sqrt{|G|}S_{(x-y)+(g-h),a-b}=\sqrt{|G|}S_{x-y,a-b}\sqrt{|G|}S_{g-h,a-b}\\
&=B((x,y),(a,b))B((g,h),(a,b))\\
\end{align*} 
and an identical computation shows
\begin{align*}
&B((x,y),(a+g,b+h))=B((x,y),(a,b))B((x,y),(g,h)).\\
\end{align*}

Thus we have that $M\in C_{2,G}$ and in particular it is a normalizer gate over $G\oplus G$.
\subsubsection{O}
Recall
\[O(|x\rangle)=\sum_{y\in G}\theta_x^{-1}S_{x,y}\theta_y^{-1} |y\rangle\]
We look to show that $O\in C_{1,G}$ and in particular that $O$ is a normalizer gate over $G$.  We quickly see that we are pre-composing and post-composing with $\theta^{-1}_z$, which from above we have seen is a quadratic phase gate.  Thus we need only show that 
\[\sqrt{|G|}S:|x\rangle\mapsto \sum_{y\in G} S_{x,y}|y\rangle\]
is in is a normalizer gate over $G$.  Utilizing our lemma which proved that $S_{x,y}$ was a bicharacter we can in fact write 
\[\sqrt{|G|}S_{x,y}=\chi_{y}(x)\]
where $\chi_y$ is a character. Then we have 
\[S(|x\rangle)=\frac{1}{|G|}\sum_{y\in g}\chi_{y}(x)|y\rangle\]
which is exactly the global quantum Fourier transform.  Thus we have $S$ is a noramlizer gate over $G$ and so $O$ is as well.
\par  

Thus we have completed our proof of Theorem 1.  
\end{proof}
\section{General Anyons}

Though the $1$-qudit gates in our scheme always form a finite group, they are not always generalized Clifford gates as we show below for the Fibonacci anyon. 

\subsection{Fib}
The simple objects of Fib are $1$ and $\tau$. The only nontrivial fusion rule is
\[\tau\otimes \tau=1\oplus \tau.\] Let $\phi=\frac{1+\sqrt{5}}{2}$ be the golden ratio. Then we can write the evaluation moves explicitly as 
\[d_1=1,\qquad d_\tau=\phi\]
\[T=\left(\begin{array}{cc}
1 & 0 \\
0 & e^{4\pi i/5}\\

\end{array}\right)\]
\[S=\frac{1}{\sqrt{2+\phi}}\left(\begin{array}{cc}
1 & \phi \\
\phi & -1\\

\end{array}\right)\]
\[R^{\tau\tau}_1=e^{\frac{-4\pi i}{5}}\qquad R^{\tau\tau}_\tau=e^{\frac{3\pi i}{5}}\]
\[F^{\tau\tau\tau}_{\tau}=\left(\begin{array}{cc}
\phi^{-1} & \phi^{-1/2}\\ 
\phi^{-1/2} & -\phi^{-1}\\

\end{array}\right)\]

This case differs greatly from the previous case.  The most striking of these differences is the lack of a tensor product structure on $V_{Fib}(\Sigma)$.  We instead look at a computational subspace inside of $V(\Sigma)$.  In particular the subspace $(\mathbb{C}^2)^{\otimes g}$ restricting all of the $a_i$ labels to be $1$.  This leaves each genus to be encircled by either a $1$ or a $\tau$.  This computational subspace is even invariant under $T_0,T_1,$ and $T_{2i}$ for $i=1,...,g$.  Unfortunately this subspace is not invariant under $T_{2i+1}$ for $i=1,...,g-1$.  This lack of invariance does suggest that this computational subspace will inherently lead to leakage, but that does not rule this out as a promising model. 
\begin{theorem}
There does not exist a basis for $V(T^2)$ for which both $S$ and $T$ lie in the associated Clifford group on the single qubit. 
\end{theorem}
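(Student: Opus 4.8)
The plan is to reduce the statement to an elementary fact about the order of $T$ together with the order of the single qubit Clifford group. Since Fibonacci has exactly two simple objects, $V(T^2)\cong\mathbb{C}^2$, so choosing a basis here means choosing a qubit, and changing the basis amounts to conjugating by some $U\in U(2)$; because the Clifford group contains all global phases, only the image of $U$ in $PU(2)$ matters. Thus what must be shown is that there is no $U$ with $USU^{-1}$ and $UTU^{-1}$ both lying in the single qubit Clifford group $\mathcal{C}_{1,\mathbb{Z}/2\mathbb{Z}}$. The first step is to record the relevant structure of that group: modulo global phases it is finite, its image in $PU(2)\cong SO(3)$ being exactly the stabilizer of the three Pauli axes $\{[X],[Y],[Z]\}$, that is, the rotation group of the octahedron, which is isomorphic to $S_4$ and has order $24$.

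Next I would look at $T$ on its own. From the modular data listed above, $T=\mathrm{diag}(1,e^{4\pi i/5})$, so the ratio of its two eigenvalues is the primitive fifth root of unity $e^{4\pi i/5}$; hence $T$, and therefore every conjugate $UTU^{-1}$, has order exactly $5$ in $PU(2)$. Since $5$ does not divide $24$ — equivalently, $S_4$ has no element of order $5$ — no conjugate of $T$ can belong to $\mathcal{C}_{1,\mathbb{Z}/2\mathbb{Z}}$. A fortiori no single change of basis can place both $S$ and $T$ inside the single qubit Clifford group, which is precisely the assertion of the theorem. Geometrically this is transparent: an element of order $5$ in $PU(2)\cong SO(3)$ is a rotation by $\pm 2\pi/5$ about some axis, and such a rotation cannot possibly permute the three coordinate axes, hence cannot normalize the Pauli group.

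The argument is short, so the point requiring care is not a hard step but a couple of bookkeeping matters: ``order'' must be computed in $PU(2)$ rather than $U(2)$, since the Clifford group literally contains the full circle of global phases — this is exactly why the obstruction is the appearance of a \emph{fifth} root of unity rather than anything about $U(2)$ — and one should justify carefully that the projective single qubit Clifford group really is $S_4$ of order $24$. In the writeup I would also emphasize why this is consistent with the finiteness recalled earlier from \cite{NS}: the image of $\mathrm{SL}(2,\mathbb{Z})$ on $V(T^2)$ is finite, but it contains an element of order $5$, whereas in the abelian case of the previous section the one-qudit gates were forced to be quadratic phase gates and Fourier transforms and so landed in a generalized Clifford group; an element of order $5$ is exactly what is incompatible with that structure.
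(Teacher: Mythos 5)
Your proof is correct, and it closes the argument by a genuinely shorter route than the paper. Both start the same way: $T^5=Id$ while the single-qubit Clifford group has order $24$ modulo phases, and $5\nmid 24$. The paper then stops short of using the explicit form of $T$: it entertains the remaining possibility that $T$ acts as the identity in the putative basis, passes to that ``normalized'' basis, and rules out $S$ by an explicit comparison with the table of the $24$ Clifford representatives in Appendix A (checking that no representative, up to global phase, has the pattern of entries of the conjugated $S$). You instead observe that the ratio of the eigenvalues of $T=\mathrm{diag}(1,e^{4\pi i/5})$ is a primitive fifth root of unity, so the class of $T$ in $PU(2)$ has order exactly $5$, and this is preserved by any change of basis; by Lagrange no conjugate of $T$ can lie in a group of order $24$, so $T$ alone is already the obstruction and the analysis of $S$ is unnecessary. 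This buys you a strictly stronger statement (no basis makes even $T$ Clifford) with no case analysis and no appendix table, and it also makes clear that the paper's residual case ``$T=Id$ in the new basis'' is vacuous, since conjugation cannot turn a non-scalar matrix into a scalar one; the only thing the paper's longer route offers in exchange is that it never needs the eigenvalues of $T$ beyond the relation $T^5=Id$. Your two bookkeeping points (order computed in $PU(2)$, and the identification of the projective Clifford group with the octahedral group $S_4$ of order $24$) are exactly the facts recorded in the paper's appendix, so the argument is complete as you outline it.
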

\begin{proof}
First we observe that $T^5=Id$.  Then as the order of the Clifford group is $24$ we know that as $5$ does not divide $24$ the only possibility is that in our chosen basis $T$ is the identity matrix.  So in our new ``normalized" basis we have 
\[T=\left(\begin{array}{cc}
1 & 0\\
0 & 1\\
\end{array}\right)\]
and
\[S=\frac{1}{\sqrt{2+\phi}}\left(\begin{array}{cc}
1 & e^{-4\pi i/5}\phi\\
\phi e^{4\pi i/5} & -1\\
\end{array}\right)\]
By explicit computation we can show $S$ is not a Clifford operator, even up to a global phase.  We quickly see that $S$ has order $2$.  Then we have $9$ matrices to compare this to, up to global phase.  Explicit computation (refer to Appendix A \ref{table}) shows that of these $9$ matrices, $4$ have the property that their off diagonals are equal, $3$ have the property that their off diagonals sum to zero, and the remaining two have at least one zero entry.  All three of these properties are preserved under global phases, but our matrix $S$ does not have these properties.  Thus in this computational basis $S$ is not a Clifford operator.  Then as this is the only basis that allowed $T$ to be a Clifford operator we have shown that it is not possible for both $S$ and $T$ to be Clifford operators in the same basis.
\end{proof}
\appendix

\section{The Abstract Clifford Group on One Qubit}
The results of this appendix are well known, but collected here for convenience.
\subsection{The Pauli Group on One Qubit}
We start by looking at the Pauli Group on one qubit.  This is a specialization of the definition given at the beginning of this paper.  In particular we have 
\[P_1:=\langle X,Y,Z\rangle=\{\pm Id, \pm iId, \pm X, \pm iX, \pm Y, \pm iY, \pm Z,\pm iZ\}\]
Abstractly this is a $16$ element group.  As we will only be working up to a global phase it is convenient for us to define
\[P:=\{\pm Id, \pm X, \pm Y, \pm Z\}.\]    Once a computational basis for the underlying $2$ dimensional Hilbert space is chosen, then we have a realization of this group as a matrix group.  Here we have 
\[X=\left(\begin{array}{cc}
0 & 1\\
1 & 0\\
\end{array}\right)\]
\[Y=\left(\begin{array}{cc}
0 & -i\\
i & 0\\
\end{array}\right)\]
\[Z=\left(\begin{array}{cc}
1 & 0\\
0 & -1\\
\end{array}\right)\]
\subsection{The Clifford Group on One Qubit}
Now the Clifford group on one qubit can be viewed as the normalizer of the Pauli group, up to overall global phases.
\begin{defn}
The Clifford group on one qubit is 
\[C_1:=\{U\in U(2):UpU^*\in P-\{\pm Id\}, p\in P-\{\pm Id\}\}/U(1)\]
\end{defn}
\begin{prop}
The Clifford group on one qubit has order $24$.
\end{prop}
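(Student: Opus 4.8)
The plan is to realize $C_1$ as a group extension of the symmetric group $S_3$ by a group of order $4$, using the conjugation action on the Pauli set $P=\{\pm Id,\pm X,\pm Y,\pm Z\}$. By the defining condition, conjugation by any $U\in C_1$ carries each non-scalar Pauli to a non-scalar Pauli, hence permutes the six-element set $\{\pm X,\pm Y,\pm Z\}$; moreover, since $U(pq)U^{*}=(UpU^{*})(UqU^{*})$, this action preserves all products, in particular the commutation and anticommutation relations. Forgetting the overall sign therefore yields a well defined map $\psi\colon C_1\to \mathrm{Sym}\{X,Y,Z\}\cong S_3$, and multiplicativity of conjugation makes $\psi$ a homomorphism.

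First I would check that $\psi$ is surjective by exhibiting explicit Clifford elements. The Hadamard gate $H$ satisfies $HXH^{*}=Z$, $HZH^{*}=X$, and $HYH^{*}=-Y$, so $\psi(H)$ is the transposition $(X\;Z)$; the phase gate $g=\mathrm{diag}(1,i)$ satisfies $gXg^{*}=Y$, $gYg^{*}=-X$, and $gZg^{*}=Z$, so $\psi(g)=(X\;Y)$. Two distinct transpositions generate $S_3$, hence $\psi$ is onto.

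The heart of the argument is to compute $\ker\psi$ exactly. An element $U\in\ker\psi$ fixes each axis up to sign, say $UXU^{*}=\epsilon_1 X$, $UYU^{*}=\epsilon_2 Y$, $UZU^{*}=\epsilon_3 Z$ with $\epsilon_i\in\{\pm1\}$. Conjugating the identity $XYZ=i\,Id$ forces $\epsilon_1\epsilon_2\epsilon_3=1$, so at most four sign patterns can occur; all four are in fact realized by the Paulis $Id,X,Y,Z$ (for instance $X$ fixes $X$ and negates $Y$ and $Z$), and these are pairwise distinct in $C_1$. To rule out any further kernel elements I would invoke Schur's lemma: if $U$ and $V$ induce the same signed action on $P$, then $V^{-1}U$ commutes with $X,Y,Z$, which span $M_2(\bC)$, so $V^{-1}U$ is scalar and $U=V$ in $C_1$. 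Hence each sign pattern determines at most one element of $C_1$, giving $|\ker\psi|=4$. From the exact sequence $1\to\ker\psi\to C_1\xrightarrow{\psi}S_3\to 1$ we conclude $|C_1|=4\cdot 6=24$.

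The step I expect to be the main obstacle is this last faithfulness argument rather than any of the explicit computations. Bounding the possible automorphisms by the sign relation $\epsilon_1\epsilon_2\epsilon_3=1$ is routine, but converting a bound on automorphisms into a bound on group elements requires knowing that $C_1$ (taken modulo $U(1)$) embeds into its image under conjugation; this is exactly what the Schur lemma step supplies, leveraging that the Paulis form a basis of $M_2(\bC)$. Everything else, namely surjectivity of $\psi$ and the explicit list of four kernel elements, is a direct verification.
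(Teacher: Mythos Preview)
Your argument is correct and, in fact, more complete than the paper's own proof. Both count the same $24$ conjugation actions on the non-scalar Paulis, but organize the count differently: the paper observes that $X$ has $6$ possible images and $Z$ then has $4$ (the remaining anticommuting elements), whereas you factor through the quotient map $\psi\colon C_1\to S_3$ and identify the kernel as the four Paulis. The advantage of your route is that it explicitly addresses both existence and uniqueness: you exhibit $H$ and the phase gate to show every automorphism in $S_3$ is realized, and you invoke Schur's lemma to show distinct elements of $C_1$ give distinct actions. The paper's proof tacitly assumes both of these (it only bounds the number of compatible pairs $(UXU^*,UZU^*)$ without checking each arises from some $U$, nor that $U$ is unique modulo phase), so your version is the more rigorous one. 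The trade-off is concision: the paper's $6\times 4$ count is a one-line heuristic, while your extension argument requires a bit more machinery.
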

\begin{proof}
We first note that conjugation must preserve the group structure, and in particular here we mean the multiplication of the Pauli matrices.  Thus as $Y=iXZ$, we will not need to specify the image of $Y$ under the conjugation.  Similarly $-X$ and $-Z$ will be determined by where $X$ and $Z$ are sent as well.  Thus we will only need to specify where $X$ and $Z$ end up.  We know that $X$ and $Z$ anti-commute and so  $UXU^*$ and $UZU^*$ will also need to anti-commute.  This tells us that $X$ can be sent to any element of $P-\{\pm Id\}$, but $Z$ can only be send to $P-\{\pm Id,UXU^*\}$.  Thus there are $6$ possibilities for $X$ to be sent to and $4$ possibilities for $Z$, and so $C_1$ has order $6\cdot 4=24$.
\end{proof}
\begin{theorem}\cite{G}
Similar to above, once a computational basis is chosen for the Hilbert space it is possible to describe $C_1$ explicitly.  In fact 
\[C_1=\langle H,Q\rangle\]
where 
\[H=\frac{1}{\sqrt{2}}\left(\begin{array}{cc}
1 & 1\\
1 & -1\\
\end{array}\right)\]
and
\[Q=\left(\begin{array}{cc}
1 & 0\\
0 & i\\
\end{array}\right)\]
\end{theorem}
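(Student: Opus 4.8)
The plan is to prove the nontrivial inclusion $C_1 \subseteq \langle H, Q\rangle$ by a short exact sequence together with the order count $|C_1| = 24$ established in the preceding Proposition; the reverse inclusion is a direct check. First I would verify that $H$ and $Q$ both normalize the Pauli group up to phase. A one-line conjugation computation gives $HXH^* = Z$ and $HZH^* = X$, while $QXQ^* = Y$ and $QZQ^* = Z$. Since $Y = iXZ$ and every element of $P$ is built from $X$ and $Z$, conjugation by either generator carries $P$ into $P$, so $H, Q \in C_1$ and hence $\langle H, Q\rangle \le C_1$.

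For the reverse inclusion I would exploit the conjugation action of $C_1$ on the Pauli group. Working in $PU(2)$, i.e.\ modulo $U(1)$, the image $\bar{P}$ of $P$ is a Klein four-group with nontrivial elements $[X], [Y], [Z]$, and conjugation by any element of $C_1$ permutes these three elements. This defines a homomorphism $\pi : C_1 \to \mathrm{Aut}(\bar{P}) \cong S_3$. The key computation is that $\pi(H)$ is the transposition interchanging $[X]$ and $[Z]$, while $\pi(Q)$ interchanges $[X]$ and $[Y]$ and fixes $[Z]$ (using $QYQ^* = -X$ up to sign). Two distinct transpositions generate $S_3$, so $\pi$ is already surjective on the subgroup $\langle H, Q\rangle$; combined with $\mathrm{im}\,\pi \le S_3$ this forces $\mathrm{im}\,\pi = S_3$ of order $6$, and then $|C_1| = 24$ gives $|\ker \pi| = 4$, whence $\ker \pi = \bar{P}$.

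Next I would show $\langle H, Q\rangle$ contains all of $\bar{P} = \ker \pi$. Indeed $Q^2 = \mathrm{diag}(1,-1) = Z$, then $HZH^* = X$, and $XZ = Y$ up to a phase, so $[X], [Y], [Z] \in \langle H, Q\rangle$. Thus $\langle H, Q\rangle$ contains the kernel of $\pi$ and surjects onto the quotient $S_3$. The standard fact that a subgroup containing $\ker \pi$ and mapping onto $\mathrm{im}\,\pi$ must be the whole group then yields $\langle H, Q\rangle = C_1$, completing the proof.

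I expect no substantive obstacle; the content is a finite computation once the structure is in place. The only point demanding care is bookkeeping modulo global phase: one must work consistently in $PU(2)$ so that the sign ambiguities in the conjugates (such as $HYH^* = -Y$) and the identifications $-Id \sim Id$ and $iXZ \sim Y$ neither spoil the well-definedness of $\pi$ nor corrupt the order count $|\bar{P}| = 4$. Everything else is pinned down by the established value $|C_1| = 24$.
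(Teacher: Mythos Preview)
Your argument is correct. The conjugation checks are right, the homomorphism $\pi\colon C_1\to\mathrm{Aut}(\bar P)\cong S_3$ is well defined modulo global phase, the two transpositions $\pi(H)$ and $\pi(Q)$ do generate $S_3$, and the ``subgroup containing the kernel and surjecting onto the image'' step is the standard correspondence-theorem fact. The only cosmetic point is that $H$ is its own inverse, so $HZH^*=HQ^2H$ already exhibits $X$ as a word in $H,Q$ without any extra work.

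As for comparison with the paper: the paper does not actually prove this theorem. It is stated with a citation to Gottesman, followed only by a remark that the matrix group $\langle H,Q\rangle\subset U(2)$ has order $192$ and that quotienting by the $8$ global phases generated by $(HQ)^3=e^{2\pi i/8}\,\mathrm{Id}$ leaves $24$ elements; together with the preceding proposition $|C_1|=24$ and the (implicit) inclusion $\langle H,Q\rangle\le C_1$, this is an order-count argument, backed up by the explicit table of all $24$ representatives in the appendix. Your route via the short exact sequence $1\to\bar P\to C_1\to S_3\to 1$ is a genuinely different and more structural argument: you never need to know the order of $\langle H,Q\rangle$ directly, only that it contains $\bar P$ and hits all of $S_3$. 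Both approaches lean on $|C_1|=24$, but the paper uses it to match against a separately computed $|\langle H,Q\rangle|$, whereas you use it to pin down $|\ker\pi|$ and then verify containment of the kernel by hand.
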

We note that our description of $C_1$ is as a $24$ element group.  The usually order given to the group generated by $H$ and $Q$ would be $192$, but recall we have an equivalence up to global phase of the words in $H$ and $Q$.  In particular the factor of $8$ results in an overcounting seen from $(PQ)^3=e^{2\pi i/8}Id$ which for our purposes is the identity.
\begin{cor}
As $24$ element groups
\[C_1\cong S_4.\]
As a note, $S_4$ is the symmetry group of the cube.
\end{cor}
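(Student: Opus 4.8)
The plan is to build on the two facts just established: that $C_1=\langle H,Q\rangle$ with $H$ the Hadamard and $Q=\mathrm{diag}(1,i)$, and that $|C_1|=24$. First I would record the projective relations among these generators: a direct matrix computation gives $H^2=Id$, $Q^4=Id$, and $(HQ)^3=e^{i\pi/4}Id$. Since elements of $C_1$ are only defined up to a global phase, this says that in $C_1$ the classes $\bar H,\bar Q$ satisfy $\bar H^2=\bar Q^4=(\bar H\bar Q)^3=1$.

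These relations give a surjective homomorphism from the abstract $(2,4,3)$ von Dyck group
\[\Delta=\langle a,b\mid a^2=b^4=(ab)^3=1\rangle\]
onto $C_1=\langle\bar H,\bar Q\rangle$. The external input I would invoke is the classical fact that $\Delta$ is the rotation group of the octahedron, hence finite of order $24$ and isomorphic to $S_4$ — seen, say, through its action on the four body diagonals of the dual cube, or concretely by sending $a\mapsto(1\,2)$ and $b\mapsto(1\,2\,3\,4)$, whose product $(2\,3\,4)$ has order $3$ and which together generate $S_4$. Comparing orders, the surjection $\Delta\to C_1$ together with $|\Delta|=24=|C_1|$ forces its kernel to be trivial, so $C_1\cong\Delta\cong S_4$; the remark in the corollary identifying $S_4$ with the symmetry group of the cube is precisely this octahedral picture.

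A parallel route, which sidesteps quoting the order of $\Delta$, is geometric: identify $PU(2)\cong SO(3)$ via the adjoint action on traceless Hermitian matrices equipped with the Frobenius pairing. Under this identification $X,Y,Z$ go to an orthonormal frame, so the six nontrivial Pauli classes $\{\pm X,\pm Y,\pm Z\}$ become the vertices of a regular octahedron $\mathcal{O}$ on the Bloch sphere, and by the definition of $C_1$ a rotation lies in $C_1$ exactly when it permutes these vertices; hence $C_1$ \emph{is} the rotational octahedral group, which acts on the four diagonals of the dual cube as $S_4$. In either approach the computations are routine; the only genuinely non-mechanical ingredient is the classical identification of the order-$24$ octahedral rotation group — equivalently the spherical $(2,3,4)$ triangle group — with $S_4$, which I would simply cite (e.g.\ Coxeter) rather than reprove.
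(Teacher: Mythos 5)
Your first argument is essentially the paper's, made rigorous: the paper merely notes $S_4=\langle (1,2),(1,2,3,4)\rangle$ and appeals to $C_1=\langle H,Q\rangle$, implicitly sending $H\mapsto (1,2)$ and $Q\mapsto (1,2,3,4)$, while you actually verify the projective relations $\bar H^2=\bar Q^4=(\bar H\bar Q)^3=1$ (the paper records the phase relation $(HQ)^3=e^{2\pi i/8}\,Id$ just before the corollary, written there as $(PQ)^3$) and then use the surjection from the $(2,4,3)$ von Dyck group together with $|C_1|=24$ to force injectivity. That order comparison is precisely the step the paper leaves unstated, so your write-up fills the gap rather than changing the method; the only external input is the classical fact $|\Delta(2,4,3)|=24$ and $\Delta(2,4,3)\cong S_4$, which is fair to cite. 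Your second, geometric route via $PU(2)\cong SO(3)$ --- the six nontrivial Pauli classes become the vertices of an octahedron on the Bloch sphere, and by definition $C_1$ is exactly the rotation group permuting them, hence the octahedral rotation group acting as $S_4$ on the four diagonals of the dual cube --- is genuinely different from the paper's argument; it buys a conceptual explanation of the paper's remark that $S_4$ is the symmetry group of the cube, and it sidesteps matrix relations entirely, at the cost of invoking the adjoint identification and the classical octahedral picture instead of a direct two-generator check. Both routes are correct.
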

\begin{proof}
We see
\[S_4=\langle (1,2),(1,2,3,4)\rangle.\]
Then using the description afforded by Theorem $3$ we are done.
\end{proof}

We now provide a table of representatives of the elements of $C_1$ along with corresponding elements of $S_4$ coming from the isomorphism used in Corollary $1$.
\begin{center}\label{table}
{\renewcommand{\arraystretch}{1.35}%
 \begin{tabular}{||c c||||c c||} 
 \hline
 $C_1$ & $S_4$ & $C_1$ & $S_4$ \\ [0.5ex] 
 \hline\hline
 $\left(\begin{array}{cc}
 1 & 0\\
 0 & 1\\
 \end{array}\right)$ & $(1)$ & $\frac{1}{\sqrt{2}}\left(\begin{array}{cc}
 1 & i\\
 -1 & i\\
 \end{array}\right)$ & $(132)$  \\ 
 \hline
$\frac{1}{\sqrt{2}}\left(\begin{array}{cc}
 1 & 1\\
 1 & -1\\
 \end{array}\right)$ & $(12)$ & $\frac{1}{\sqrt{2}}\left(\begin{array}{cc}
 1 & -1\\
 -1 & -1\\
 \end{array}\right)$ & $(34)$ \\
 \hline
 $\left(\begin{array}{cc}
 1 & 0\\
 0 & i\\
 \end{array}\right)$ & $(1234)$ & $\left(\begin{array}{cc}
 0 & 1\\
 -1 & 0\\
 \end{array}\right)$ & $(12)(34)$\\
 \hline
 $\left(\begin{array}{cc}
 1 & 0\\
 0 & -1\\
 \end{array}\right)$ & $(13)(24)$ & $\left(\begin{array}{cc}
 0 & 1\\
 i & 0\\
 \end{array}\right)$ & $(24)$ \\
 \hline
 $\left(\begin{array}{cc}
 1 & 0\\
 0 & -i\\
 \end{array}\right)$ & $(1432)$ & $\left(\begin{array}{cc}
 0 & 1\\
 1 & 0\\
 \end{array}\right)$ & $(14)(23)$\\
 \hline
$\frac{1}{\sqrt{2}}\left(\begin{array}{cc}
 1 & 1\\
 i & -i\\
 \end{array}\right)$ & $(134)$ & $\frac{1}{\sqrt{2}}\left(\begin{array}{cc}
 1 & i\\
 -i & -1\\
 \end{array}\right)$ & $(14)$\\
  \hline
$\frac{1}{\sqrt{2}}\left(\begin{array}{cc}
 1 & 1\\
 -1 & 1\\
 \end{array}\right)$ & $(1423)$ & $\frac{1}{\sqrt{2}}\left(\begin{array}{cc}
 1 & -1\\
 -i & -i\\
 \end{array}\right)$ & $(123)$\\
  \hline
$\frac{1}{\sqrt{2}}\left(\begin{array}{cc}
 1 & 1\\
 -i & i\\
 \end{array}\right)$ & $(243)$ & $\frac{1}{\sqrt{2}}\left(\begin{array}{cc}
 1 & -i\\
-i & 1\\
 \end{array}\right)$ & $(1342)$\\
  \hline
$\frac{1}{\sqrt{2}}\left(\begin{array}{cc}
 1 & i\\
 1 & -i\\
 \end{array}\right)$ & $(234)$ & $\frac{1}{\sqrt{2}}\left(\begin{array}{cc}
 1 & -i\\
 -1 & -i\\
 \end{array}\right)$ & $(124)$\\
  \hline
$\frac{1}{\sqrt{2}}\left(\begin{array}{cc}
 1 & -1\\
 1 & 1\\
 \end{array}\right)$ & $(1324)$ & $\left(\begin{array}{cc}
 0 & 1\\
 -i & 0\\
 \end{array}\right)$ & $(24)$ \\  
  \hline
$\frac{1}{\sqrt{2}}\left(\begin{array}{cc}
 1 & -i\\
 1 & i\\
 \end{array}\right)$ & $(143)$ & $\frac{1}{\sqrt{2}}\left(\begin{array}{cc}
 1 & -i\\
 i & -1\\
 \end{array}\right)$ & $(23)$ \\
  \hline
$\frac{1}{\sqrt{2}}\left(\begin{array}{cc}
 1 & i\\
 i & 1\\
 \end{array}\right)$ & $(1243)$ & $\frac{1}{\sqrt{2}}\left(\begin{array}{cc}
 1 & -1\\
 i & i\\
 \end{array}\right)$ & $(142)$\\ [1ex] 
 \hline
\end{tabular}}
\end{center}

\end{document}